\newtheorem{theorem}{Theorem}[section]
\newtheorem{thmx}{Theorem}
\newtheorem{corx}[thmx]{Corollary}
\newtheorem{proposition}[theorem]{Proposition}
\newtheorem{lemma}[theorem]{Lemma}
\newtheorem{corollary}[theorem]{Corollary}
\theoremstyle{definition}
\newtheorem{definition}[theorem]{Definition}
\theoremstyle{remark}
\newtheorem{example}[theorem]{Example}				
\newtheorem{remark}[theorem]{Remark}
\newcommand{\Q}{\mathbb{Q}}
\newcommand{\Z}{\mathbb{Z}}
\newcommand{\E}{\mathcal{E}}
\def\hght{\mathrm{ht}}
\def\aut{\mathrm{aut}}
\def\top{\mathcal{T}op}
\def\asc{\mathcal{A}\mathcal{S}\mathcal{C}}
\def\finite{\mathcal{F}in}
\def\RG{\operatorname{R_G}}
\def\sd{\operatorname{sd}}
\title[Realization of finite group actions]{Rigidifying simplicial complexes and realizing group actions}
\author{Cristina Costoya}
\address{CITMAga, Departamento de Matem{á}ticas, Universidade de Santiago de Compostela, 15705-A Coru{ñ}a, Spain.}
\email{cristina.costoya@usc.es}
\author{Rafael Gomes}
\address{Departamento de
Algebra, Geometr{í}a y Topolog{í}a, Universidad de M{á}laga, Campus de Teatinos, s/n, 29071-M{á}laga, Spain}
\email{rmgomes@uma.es}
\author{Antonio Viruel}
\address{Departamento de
Algebra, Geometr{í}a y Topolog{í}a, Universidad de M{á}laga, Campus de Teatinos, s/n, 29071-M{á}laga, Spain}
\email{viruel@uma.es}
\date{\today}
\thanks{The second author was supported by FCT - Funda{ç}{ã}o para a Ci{ê}ncia e Tecnologia (Portugal) with reference project 2021.04682.BD and DOI identifier 10.54499/2021.04682.BD. All authors were supported by MECIN/AEI/10.13039/501100011033 Grant PID2023-149804NB-I00.} 
\subjclass{20B25, 06A06, 06A11, 05E18, 55P10, 55U99}
\keywords{Group actions, automorphisms, simplicial complex, finite spaces, homotopy equivalences}
\begin{document}

\begin{abstract}
We show that any action of a finite group on a finitely presentable group arises as the action of the group of self-homotopy equivalences of a space on its fundamental group. In doing so, we prove that any finite connected (abstract) simplicial complex $\mathbf{K}$ can be rigidified -- meaning it can be perturbed in a way that reduces the full automorphism group to any subgroup -- while preserving the homotopy type of the geometric realization $\vert \mathbf{K} \vert$. We also obtain that every action of a finite group on a finitely generated abelian group is the action of the group of self-homotopy equivalences of a space on one of its higher homotopy groups.
\end{abstract}

\maketitle

\section{Introduction} \label{section:intro}
In algebraic topology, one is often interested in the interplay between algebra and topology. This makes algebraic topology a field in which realizability questions emerge very naturally. These questions typically ask to what extent a given algebraic structure arises as a topological or homotopical invariant. In the 1960's, Steenrod asked which graded algebras appear as the cohomology of a space. This problem has been extensively studied \cite{aguade1982} and settled in the case of polynomial algebras \cite{andersen2008steenrod}. Steenrod also proposed the \textit{$G$-Moore space problem}, $G$ a group, that seeks a characterization of the $\Z G$-modules that appear as the homology of some simply-connected $G$-Moore space \cite[Problem 51]{lashof}. Carlsson proved that not all $\Z G$-modules can be realized \cite{carlsson} and Vogel characterized the finite groups $G$ for which all $\Z G$-modules are realizable \cite{Vogel1987ASO}. On the other hand, D.\ Kahn formulated the so-called \textit{realizability problem for abstract groups}, that given a group $G$, asks if there is a simply-connected space whose group of self-homotopy equivalences is isomorphic to $G$ \cite{kahn}. The first and third authors gave a positive answer to the latter problem when $G$ is a finite group using rational homotopy theory techniques in \cite{CV:finitegroupis} and for a larger class of groups in \cite{CV:actions}, where any faithful action of a finite group on a $\Q$-module is realized as the action of the group of self-homotopy equivalences of a space on a certain higher homotopy group. 

Both the \textit{$G$-Moore space problem} and the \textit{realizability problem for abstract groups} focus on realizing algebraic structures as homotopy invariants of simply-connected spaces. In this article, we examine a complementary setting: we seek to realize an action of a group $G$ on a group $M$ as the action of the group of self-homotopy equivalences of a space $X$ on the fundamental group of $X$. This perspective provides a natural complement to \cite[Question 1.3]{CV:actions}, which asks when the action of a group $G$ on an abelian group $M$ arises as the action of the group of self-homotopy equivalences $\E(X)$ of a space $X$ on a higher homotopy group $\pi_k(X)$ of $X$, for some $k \geq 2$.

We achieve this realization when $G$ is a finite group acting, not necessarily faithfully, on a finitely presentable group $M$. Our main result, which appears as Theorem \ref{main:theorem:text} in Section \ref{section:finite:spaces}, is as follows.
\begin{thmx}\label{main:theorem}
 Let $G$ be a finite group acting on a finitely presentable group $M$. Then there is a topological space $X$ such that:
    \begin{enumerate}[label={\rm (\roman{*})}] 
        \item\label{main:theorem:1} $\E(X) \cong G$;
        \item\label{main:theorem:2}  $\pi_1(X) \cong M$;
        \item\label{main:theorem:3} the action $G\curvearrowright M$ is equivalent to the canonical action $\E(X) \curvearrowright \pi_1(X)$.
    \end{enumerate}
\end{thmx}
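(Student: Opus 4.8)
The plan is to separate the argument into three stages: a purely combinatorial realization of the action on a simplicial complex, the rigidification step that cuts the symmetry down to $G$, and a passage to finite topological spaces where self-homotopy equivalences become computable from the combinatorics. Throughout I record the given data as a homomorphism $\phi\colon G\to\mathrm{Aut}(M)$.

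First I would build a finite connected simplicial complex carrying a genuine, faithful simplicial $G$-action that \emph{fixes a basepoint} and realizes $\phi$ on $\pi_1$. Pick a finite generating set $S$ of $M$ and let $G$ act freely on the alphabet $G\times S$ by $h\cdot(g,s)=(hg,s)$; the free group $F$ on $G\times S$ then surjects $G$-equivariantly onto $M$ via $(g,s)\mapsto\phi(g)(s)$. Since $M$ is finitely presentable, Tietze's theorem guarantees that the (automatically $G$-invariant) kernel is the normal closure of a finite set $T$, and replacing $T$ by its finite orbit $G\cdot T$ yields a $G$-invariant finite set of relators. The resulting presentation $2$-complex $P$ carries a cellular $G$-action fixing its unique vertex $v_0$, with $\pi_1(P,v_0)\cong M$ and induced action exactly $\phi$; because $G$ permutes the $1$-cells freely, this action is faithful (the trivial cases, where $M$ or $\phi$ degenerates, reduce to the already known realizability of finite groups as $\E(X)$). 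An equivariant triangulation produces a finite connected complex $\mathbf{L}$ with a faithful simplicial $G$-action fixing $v_0$, so that $G\hookrightarrow\mathrm{Aut}(\mathbf{L})$ and the action on $\pi_1$ is the honest automorphism $\phi$.

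Next I would apply the rigidification result to the pair $(\mathbf{L},G)$, viewing $G$ as a subgroup of $\mathrm{Aut}(\mathbf{L})$: this perturbs $\mathbf{L}$ into a finite connected complex $\mathbf{K}'$ with $\mathrm{Aut}(\mathbf{K}')\cong G$ and $|\mathbf{K}'|\simeq|\mathbf{L}|$. As the perturbation preserves the homotopy type and retains precisely the subgroup $G$ as its automorphisms, the fundamental group is unchanged, the basepoint $v_0$ stays fixed, and the induced action $G\curvearrowright\pi_1(|\mathbf{K}'|)\cong M$ is still $\phi$. Finally I would pass to the finite $T_0$-space $X$ associated with $\mathbf{K}'$ (its face poset, or the minimal core thereof). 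McCord's theorem provides a weak homotopy equivalence between $X$ and $|\mathbf{K}'|$, so $\pi_1(X)\cong M$ with the same $G$-action, while automorphisms of the face poset coincide with simplicial automorphisms, giving $\mathrm{Aut}(X)\cong\mathrm{Aut}(\mathbf{K}')\cong G$. Stong's rigidity for finite spaces—on a minimal finite space every self-homotopy equivalence is a homeomorphism—then yields $\E(X)\cong\mathrm{Aut}(X)\cong G$, which is \ref{main:theorem:1}; \ref{main:theorem:2} is the fundamental-group computation; and since $v_0$ is a fixed basepoint the canonical action $\E(X)\curvearrowright\pi_1(X)$ lifts from $\mathrm{Out}(M)$ to $\mathrm{Aut}(M)$ and equals $\phi$, which is \ref{main:theorem:3}.

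The genuinely hard step is the rigidification itself: perturbing $\mathbf{L}$ so as to destroy every simplicial symmetry outside $G$ while leaving the homotopy type of the realization intact. A secondary delicate point is ensuring that the chosen finite model is minimal (free of beat points), so that Stong's rigidity actually applies and $\E(X)$ cannot exceed $\mathrm{Aut}(\mathbf{K}')$; this dictates how the rigidification must interact with the face poset. The basepoint subtlety—that a free self-homotopy equivalence only determines an \emph{outer} automorphism of $\pi_1$—is the reason I insist throughout on a $G$-fixed vertex $v_0$, which is exactly what upgrades the realized action to the honest action $\phi$ required in \ref{main:theorem:3}.
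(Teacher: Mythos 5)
Your overall architecture is the same as the paper's: produce a $G$-equivariant finite presentation whose presentation $2$-complex carries a simplicial $G$-action fixing the unique vertex (the paper's ``$G$-symmetric presentation'', Lemma~\ref{lemma:symm:pres}, with generators indexed by $G\times S$ exactly as you propose), triangulate equivariantly, rigidify down to $G$ (Theorem~\ref{simplicial:complex:thm}), pass to the face poset, and invoke Stong's theorem that $\aut(X)\cong\E(X)$ for minimal finite spaces. Your basepoint observation and the identification $\aut(\mathcal{X}(\mathbf{K}'))\cong\aut(\mathbf{K}')$ are both correct and appear in the paper. Invoking rigidification as a black box is legitimate here since it is the paper's Theorem~\ref{secondary:theorem}, and you correctly flag it as the hard combinatorial input.

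The genuine gap is in the last step, where you propose to use ``the face poset, or the minimal core thereof.'' The face poset $\mathcal{X}(\mathbf{K}')$ is not minimal, so Stong's rigidity does not apply to it; and passing to its core $X_c$ does not help, because by Stong's theory $\aut(X_c)\cong\E(X_c)\cong\E(\mathcal{X}(\mathbf{K}'))$, which is precisely the group you have no control over --- for a non-minimal finite space the natural map $\aut\to\E$ need be neither injective nor surjective (e.g.\ the face poset of a single $2$-simplex has $\aut\cong\Sigma_3$ but, having a maximum, is contractible with $\E=\{1\}$). So taking the core is circular: it converts the unknown $\E(\mathcal{X}(\mathbf{K}'))$ into the automorphism group of the model rather than identifying it with $G$. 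The paper's resolution is different and non-trivial: it wedges a rigid, weakly contractible, \emph{minimal} finite space $W_l$ (Example~\ref{example:W2}), of height strictly larger than that of the face poset, onto every beat point of $\mathcal{X}(\RG(\mathbf{K}))$. This kills all beat points (so the result is minimal and Stong applies), preserves the weak homotopy type by Lemma~\ref{prop:wedge}, and a chain-length argument shows the gluing points are permuted among themselves, so the automorphism group is still $G$. Some such construction is needed to close your argument; ``take the core'' will not do it.
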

A noteworthy feature of Theorem \ref{main:theorem} is that, unlike our previous work \cite{cgv:perm} and several other works by the first and third authors \cite{CV:actions}, \cite{CV:finitegroupis} and Méndez \cite{cmv:arrow:cat}, \cite{cmv:arrow:coalg}, we do not rely on a result by Frucht \cite{Frucht}, which says that any finite group is the full automorphism group of a graph. Instead, in order to realize the group $G$ as the group of self-homotopy equivalences of a space, we rely on the combinatorial and geometric nature of simplicial complexes and the theory of (minimal) finite spaces. Furthermore, Theorem \ref{main:theorem} is in a certain sense a generalization of our previous work \cite{cgv:perm}, since, for $M$ abelian, Hurewicz's Theorem implies that we also realize the action the group of self-homotopy equivalences on the first homology group. In \cite{cgv:perm}, we specifically realized a permutation module as the homology of a space. 

Furthermore, we obtain, as a consequence of Theorem~\ref{main:theorem}, that every action of a finite group $G$ on a finitely generated abelian group $M$ arises as the canonical action of the group of self-homotopy equivalences $\E(X)$ of a space $X$ on $\pi_k(X)$, for some $k \geq 2$. This completely addresses \cite[Question 1.3]{CV:actions} in the case $M$ is finitely generated and appears as Corollary~\ref{main:cor:text}.

\begin{corx}\label{main:cor}
    Let $k \geq 2$ be a natural number, $G$ a finite group and $M$ a finitely generated $\Z G$-module. Then, there is a topological space $X$ such that:
        \begin{enumerate}[label={\rm (\roman{*})}]
            \item\label{main:cor:1} $\E(X) \cong G$;
            \item\label{main:cor:2} $\pi_k(X) \cong M$;
            \item\label{main:cor:3} the action $G\curvearrowright M$ is equivalent to the canonical action $\E(X) \curvearrowright \pi_k(X)$.
        \end{enumerate}
\end{corx}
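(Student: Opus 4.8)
The plan is to deduce the statement from Theorem~\ref{main:theorem} by transporting the realization from the fundamental group to the $k$-th homotopy group. First I would record that a finitely generated abelian group is finitely presentable, so Theorem~\ref{main:theorem} already produces a space realizing the action $G\curvearrowright M$ as $\E(-)\curvearrowright\pi_1(-)$; the real task is to raise the degree of the homotopy group carrying $M$ from $1$ to $k\ge 2$ without enlarging the group of self-homotopy equivalences. My strategy is to re-run the construction underlying Theorem~\ref{main:theorem}, but to replace the degree-one ``homotopy carrier'' (a presentation complex for $M$) by a simply connected carrier that places $M$ in degree $k$, and then to rigidify exactly as before.

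Concretely, since $M$ is finitely generated abelian it admits a free resolution $0\to\Z^{r}\to\Z^{s}\to M\to 0$, which I would realize by a Moore space $P$: a wedge of $s$ copies of $S^{k}$ with $r$ cells of dimension $k+1$ attached. For $k\ge 2$ the space $P$ is $(k-1)$-connected, so by the Hurewicz theorem $\pi_k(P)\cong\widetilde H_k(P)\cong M$, and the canonical map $\E(P)\to\mathrm{Aut}(M)$ is surjective; hence the prescribed homomorphism $G\to\mathrm{Aut}(M)$ is realized by self-homotopy equivalences of $P$ at the level of $\pi_k$. The $(k-1)$-connectivity is convenient for two reasons: it makes $\pi_k$ the bottom nonzero homotopy group, so the canonical $\E(X)$-action on it is literally the action on $H_k(X)$ (linking the statement to the homological realization of \cite{cgv:perm}), and it guarantees that the carrier contributes nothing in low degrees that could interfere with $\pi_1$.

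I would then glue $P$ to the same rigid combinatorial gadget used to prove Theorem~\ref{main:theorem} --- the one built from a rigidified simplicial complex together with its finite (minimal) model --- so that the resulting space $X$ satisfies $\E(X)\cong G$ with the $G$-action on $\pi_k(X)\cong M$ equal to the prescribed one. The main obstacle, exactly as in Theorem~\ref{main:theorem}, is proving that $\E(X)$ is \emph{no larger} than $G$: I must ensure that moving the carrier into degree $k$ introduces no new self-equivalences and that the combinatorial copy of $G$ is correctly synchronized with the module action. This is delicate precisely because the action need not be faithful: the elements of $\ker(G\to\mathrm{Aut}(M))$ act trivially on $\pi_k(X)$ and so must be detected as distinct self-equivalences purely through the rigid gadget. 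Verifying this synchronization, and that the gadget's rigidity is insensitive to the degree of the homotopy carrier, is where the finite-space machinery behind Theorem~\ref{main:theorem} does the essential work.
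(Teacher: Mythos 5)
Your proposal takes a genuinely different route from the paper, and as written it has a gap at its central step. The paper does \emph{not} rebuild the construction around a Moore space carrier. Instead it takes the minimal finite space $Y$ already produced by Theorem~\ref{main:theorem:text} (with $\E(Y)\cong\aut(Y)\cong G$ and $\pi_1(Y)\cong M$) and forms the $(k-1)$-fold non-Hausdorff join $X=Y\circledast(W_2\sqcup\{p\})\circledast\cdots\circledast(W_2\sqcup\{p\})$, where $W_2\sqcup\{p\}$ is a rigid minimal finite model of $S^0$ from Example~\ref{example:W2}. Minimality is preserved under $\circledast$, the automorphism group of the join is the product of the factors' automorphism groups, so $\E(X)\cong\aut(X)\cong G$ with no further work; and $X$ is weakly equivalent to $\Sigma^{k-1}\vert\mathcal{K}(Y)\vert$, so the Hurewicz theorem (here is where $M$ abelian enters, since $\pi_1(Y)\cong H_1$) together with the suspension isomorphism transports $\pi_1(Y)\cong M$ to $\pi_k(X)\cong M$, equivariantly by naturality. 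This is a one-step reduction that never leaves the category of minimal finite spaces and reuses the degree-one realization verbatim.

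The gap in your version is the passage from ``$\E(P)\to\aut(M)$ is surjective'' to an actual input for the rigidification machinery. Surjectivity of that map only gives you, for each $g\in G$, a homotopy class of self-equivalences of the Moore space $P$; it does not give a strict simplicial action of $G$ on a finite simplicial complex model of $P$, nor does it address coherence (the assignment $g\mapsto[f_g]$ need not even be a homomorphism into $\E(P)$ without further argument when the action is not faithful). Producing a strict simplicial $G$-action inducing a prescribed action on a homotopy group is precisely what the $G$-symmetric presentation and $\Delta$-complex constructions of Section~\ref{section:covers} accomplish in degree one (Theorem~\ref{simp:comp:covering}), and you would have to re-establish an analogue for $M(M,k)$: a $G$-symmetric presentation of $M$ as a $\Z G$-module, an equivariant cell structure on the wedge of $k$-spheres with $(k+1)$-cells attached along the relations, and a triangulation on which $G$ acts simplicially. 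This is plausible but is real additional work, not a corollary of what is already proved. Relatedly, the rigidification $\RG(\mathbf{K};P)$ is not a gadget one ``glues $P$ to''; it is a modification built along a path through all vertices of a complex already equipped with a simplicial $G$-action, so the equivariant simplicial model must come first. Once that model exists, the rest of your plan (rigidify, pass to a minimal finite space, observe that the kernel of $G\to\aut(M)$ is still detected because $\aut(\mathbf{L})\cong G$ as an abstract group) does go through by the arguments of Sections~\ref{section:simp:comp} and~\ref{section:finite:spaces}.
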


The strategy to prove Theorem \ref{main:theorem} can be summarized as follows. Firstly, we show that $M$ admits a $G$-symmetric presentation: a presentation on which $G$ acts by permuting generators and relations. This allows us to construct an (abstract) simplicial complex $\mathbf{K}$ for which the fundamental group of the geometric realization $\vert \mathbf{K} \vert$ is $M$ and such that $G < \aut(\mathbf{K})$ acts via automorphisms. We then modify $\mathbf{K}$ to eliminate unnecessary automorphisms while preserving its homotopy type, ensuring that the action of $G \cong \aut(\mathbf{K})$ on $\pi_1(\vert \mathbf{K} \vert) \cong M$ is precisely the given action $G \curvearrowright M$. Finally, we approximate $\mathbf{K}$ by a minimal finite space $X$ so that $\aut(X) \cong \mathcal{E}(X)$ and realize $G \curvearrowright M$ as $\mathcal{E}(X) \curvearrowright \pi_1(X)$. From this strategy resulted a theorem of independent interest concerning the full automorphism group of finite (connected) simplicial complexes. We state it below and it can be found as Theorem \ref{simplicial:complex:thm} in Section \ref{section:simp:comp}.  
\begin{thmx} \label{secondary:theorem}
Let $\mathbf{K}$ be a finite connected (abstract) simplicial complex and let $G$ be a subgroup of the full automorphism group $\aut(\mathbf{K})$ of $\mathbf{K}$. Then, there is a simplicial complex $\mathbf{L}$ such that:
\begin{enumerate}[label={\rm (\roman{*})}]
    \item $\aut(\mathbf{L}) \cong G$;
    \item $\vert \mathbf{L} \vert$ is homotopy equivalent to $\vert \mathbf{K} \vert$.
\end{enumerate}
\end{thmx}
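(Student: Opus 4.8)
The plan is to build $\mathbf{L}$ from $\mathbf{K}$ by attaching rigid, homotopically invisible decorations that are invariant under the given action of $G$ but under no larger subgroup of $\aut(\mathbf{K})$. Throughout I identify an automorphism of a finite complex with the permutation it induces on its (finite) vertex set $V$, so that $\aut(\mathbf{K})$ sits faithfully inside the symmetric group on $V$. The two quantities to control, the homotopy type of $\vert\mathbf{L}\vert$ and the group $\aut(\mathbf{L})$, pull in opposite directions, and reconciling them is the whole point.

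First, the homotopy-preservation mechanism. A connected gadget meeting $\mathbf{K}$ in a single vertex is just a wedge summand and costs nothing up to homotopy, but a gadget touching two or more vertices of $\mathbf{K}$ generally creates new $1$-cycles and changes $\pi_1$. To relate several vertices without this penalty I would only ever glue a \emph{contractible} gadget $C$ to $\mathbf{K}$ along a \emph{contractible} subcomplex $P = C\cap\mathbf{K}$ (a path, or a subtree of a spanning tree of the $1$-skeleton): then $\vert\mathbf{K}\cup C\vert\simeq\vert\mathbf{K}\vert$, since collapsing the contractible $C$ and then the contractible $P$ are each homotopy equivalences. Passing first to a barycentric subdivision creates the room to carry out these attachments on disjoint contractible loci and to make the decorated vertices recognizable; subdivision is a homeomorphism on realizations and induces $\aut(\sd\mathbf{K})\cong\aut(\mathbf{K})$, so neither invariant is disturbed.

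Second, the asymmetrization, where the genuine difficulty lies. Decorating vertices or simplices with pairwise non-isomorphic rigid rooted trees realizes an arbitrary $G$-invariant coloring and forces every automorphism of $\mathbf{L}$ to preserve colors; but a $G$-invariant coloring is constant on $G$-orbits, so such purely local decorations can only cut $\aut(\mathbf{K})$ down to the subgroup preserving every $G$-orbit setwise, which in general properly contains $G$. To descend all the way to $G$ one must record relations between vertices, not merely labels on them. The device I would use is to fix a bijective enumeration $t=(v_1,\dots,v_n)$ of $V$ and to encode, through the contractible linking gadgets above, the entire $G$-orbit $G\cdot t$: since $t$ lists every vertex its $G$-stabilizer is trivial, and hence $\{a\in\aut(\mathbf{K}):a\cdot t\in G\cdot t\}$ is exactly $G$. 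Concretely one attaches, $G$-equivariantly and along contractible subcomplexes, rigid directed ``spines'' that read off each enumeration in the orbit; an automorphism of $\mathbf{L}$ must carry spines to spines, hence send $t$ into its $G$-orbit, hence lie in $G$, while every $g\in G$ visibly extends.

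Finally I would assemble the pieces: check that the spines are rigid and pairwise distinguishable so that they create no new symmetries and force any automorphism of $\mathbf{L}$ to respect the decomposition into its $\mathbf{K}$-part and its decoration; then read off $\aut(\mathbf{L})\cong G$ from the stabilizer computation and $\vert\mathbf{L}\vert\simeq\vert\mathbf{K}\vert$ from the collapsing principle. I expect the main obstacle to be exactly the simultaneous satisfaction of the two constraints: the relational gadgets needed to reach $G$ are precisely the ones that threaten to introduce new $1$-cycles, so the technical heart is to keep every such attachment supported on a contractible subcomplex and to arrange the (a priori overlapping) attachments so that their union still collapses back to $\mathbf{K}$, all while preserving $G$-equivariance and rigidity and ruling out unintended isomorphisms among the decorations themselves.
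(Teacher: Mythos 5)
Your proposal follows essentially the same route as the paper: the paper's $G$-rigidification $\RG(\mathbf{K};P)$ attaches, $G$-equivariantly, rigid contractible ``bands'' along the $G$-orbit of a path $P$ containing every vertex (your ``spines'' encoding the orbit of the enumeration $t$), preserves the homotopy type by collapsing each contractible band onto the contractible path it is glued along, and pins down $\aut(\mathbf{L})\cong G$ via exactly your observation that a subgroup of a symmetric group is determined by the diagonal orbit of a tuple listing all points (the paper's Lemma~\ref{lemma:g:orbits}). The only cosmetic difference is that the paper uses a path with repeated vertices in place of your bijective enumeration, and handles $G=\{1\}$ separately by wedging pairwise non-isomorphic rigid contractible complexes at the vertices, which is your ``coloring'' mechanism.
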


In order to prove Theorem \ref{secondary:theorem}, we will introduce the notion of \textit{$G$-rigidification of a finite connected (abstract) simplicial complex} $\mathbf{K}$ (with respect to the path $P$), denoted $\RG(\mathbf{K};P)$, where $G$ is a subgroup of $\aut(\mathbf{K})$ and $P$ is a path containing all the points of $\mathbf{K}$. The simplicial complex $\RG(\mathbf{K};P)$ is obtained by gluing contractible simplicial complexes to $\mathbf{K}$ through the path $P$ and its orbits $g \cdot P$ allowing us to reduce the automorphism group to $G$ without changing the homotopy type. When $G$ is the trivial group $\{1\}$, the $G$-rigidification does not yield a simplicial complex with the desired properties. In this case, we glue (different) contractible simplicial complexes in each point, so that all non-trivial automorphisms are eliminated. 

\begin{remark}
The core idea behind Theorem~\ref{secondary:theorem} is the following. Given a permutation group $G$ acting on a finite set $A=\{a_1 \dots,a_n\}$, the group $G$  can be determined by the orbit of the $n$-tuple $(a_1, \dots,a_n)$ under the diagonal action. This means that two subgroups $G$ and $H$ of $\aut(A)$ coincide if and only if the $G$-orbit and $H$-orbit of $(a_1, \dots,a_n)$ are the same. This appears as Lemma~\ref{lemma:g:orbits}.

In our approach to Theorem~\ref{secondary:theorem}, we consider a finite connected (abstract) simplicial complex $\mathbf{K}$ with the action of its full automorphism group. We assume $\mathbf{K}$ to be connected so that there is a path containing all the vertices of $\mathbf{K}$ (with possible repetitions) and regard paths as $n$-tuples. Then, we modify the simplicial complex $\mathbf{K}$ along a path containing all points of $\mathbf{K}$ and its $G$-orbits, for $G$ a subgroup of $\aut(\mathbf{K})$. 
\end{remark}

\subsection*{Notation}
We use boldface symbols such as $\mathbf{K}$ and $\mathbf{L}$ to denote (abstract) simplicial complexes. The standard (abstract) simplicial complex over the set $A$ is denoted by $\mathbf{\Delta}(A)$ and $\vert \mathbf{K} \vert$ denotes the geometric realization of the (abstract) simplicial complex $\mathbf{K}$. Roman capital letters can denote several different things: $G$ is reserved for a finite group, $M$ for a finitely presentable group and $X$ is always a topological space. The letters $P$ and $Q$ will denote paths of an (abstract) simplicial complex. Calligraphic expressions denote categories: $\top$, $\asc$ and $\finite$ are respectively the categories of topological spaces, abstract simplicial complexes and finite spaces.

\subsection*{Organization of the paper}
Section \ref{section:covers} is divided in two parts. In the first one, we show a finitely presentable group $M$ with a $G$-action admits a $G$-symmetric presentation and use this to construct a simplicial complex whose fundamental group is $M$ and on which $G$ acts simplicially. In the second one, we introduce auxiliary simplicial complexes that play a key role in subsequent sections. In Section \ref{section:simp:comp}, we construct the \textit{$G$-rigidification of a simplicial complex} $\mathbf{K}$, denoted $\RG(\mathbf{K})$, and show this is a simplicial complex which is homotopy equivalent to $\mathbf{K}$ and whose group of automorphisms $\aut(\RG(\mathbf{K}))$ is $G$. Finally, in Section \ref{section:finite:spaces}, by considering the poset of simplices of $\RG(\mathbf{K})$ and eliminating beat points in a careful manner, we construct a finite space $X$ that is minimal, so that $\aut(X) \cong \E(X)$, and inherits the realizability properties of $\RG(\mathbf{K})$.

\section{Foundations} \label{section:covers}
\subsection{$G$-symmetric presentations} 
In this subsection, $G$ denotes a finite group acting on a finitely presentable group $M$ and $F_n$ is the free group of rank $n$. The aim of the current subsection is to construct a simplicial complex on which the group $G$ acts by automorphisms and whose geometric realization has fundamental group isomorphic to $M$. In order to do that, we need a presentation of $M$ compatible with the $G$-action.

We will think of elements in $F_n$ as words in an alphabet of $n$ characters, namely $\{x_1,\ldots , x_n\}$. Therefore, given $w\in F_n$ and a group $M,$ we can also think of $w$ as a function $w\colon M^n\to M$ where $w(m_1,\ldots, m_n)$ is given by evaluating the word $w$ in $M$ by replacing $x_i$ by $m_i$.

Let us define symmetric presentations relative to a group action, a slight variation of the concept of symmetric presentation in \cite[Section 1]{SymPrep}.

\begin{definition}
    Let $G$ be a finite group acting on a finitely presentable group $M$. A presentation 
    $$M=\langle x_1,\ldots, x_n\, |\, w_j(x_1,\ldots, x_n)=1, 1\leq j\leq r\rangle$$
    is said to be a \textit{$G$-symmetric presentation of $M$} if there exist permutation representations $\xi\colon G\to \Sigma_n$ and $\rho\colon G\to \Sigma_r$ such that for every $g\in G$ the following hold
    \begin{enumerate}[label={\rm (\roman{*})}]
    \item $g(x_i)=x_{\xi(g)(i)},$ for $1\leq i\leq n$
    \item $w_j(x_{\xi(g)(1)},\ldots, x_{\xi(g)(n)})=w_{\rho(g)(j)}(x_1,\ldots, x_n)$
    \end{enumerate}
\end{definition}

\begin{remark}
Observe that we are actually making a slight generalization of the concept of symmetric presentation in \cite{SymPrep}. In fact, given a $G$-symmetric presentation of $M$, namely
$$M=\langle x_1,\ldots, x_n\, |\, w_j(x_1,\ldots, x_n)=1, 1\leq j\leq r\rangle,$$
and taking $P=\xi(G)$, then $PM$ as defined in \cite[Section 1]{SymPrep} equals $M.$ Observe that the $G$-action may not be faithful hence $P=\xi(G)$ may be non-isomorphic to $G$.

On the other hand, given a permutation group $P\leq \Sigma_r,$ and a presentation of $M$ with $r$-generators, the group $PM$ defined in \cite[Section 1]{SymPrep} admits a $P$-action and the given presentation is indeed a $P$-symmetric presentation.
\end{remark}

\begin{lemma} \label{lemma:symm:pres}
    Let $G$ be a finite group acting on a finitely presentable group $M$. Then there exists a $G$-symmetric presentation of $M$.
\end{lemma}
\begin{proof}
    Let $e\in G$ denote the neutral element, and let $$M=\langle m_1,\ldots, m_n\, |\, w_j(m_1,\ldots, m_n)=e, 1\leq j\leq r\rangle$$
    be any finite presentation of $M.$ Since $G$ acts on the elements $m_i\in M,$ for any given $g\in G$ and $1\leq i\leq n$ there exists a (non-necessary unique) word $\widetilde{w}_{i,g}\in F_n$ such that $g(m_i)=\widetilde{w}_{i,g}(m_1,\ldots,m_n).$ 

    Consider now the group $\widetilde{M}$ given by the presentation
\begin{equation}\label{eq:sym_pres}
\widetilde{M} = \left\langle \!\!\!
\begin{array}{l|l} 
x_{1,g},\ldots,x_{n,g},\quad g\in G
&\!\!
\begin{array}{l}
w_j(x_{1,g},\ldots, x_{n,g}) = e,\quad \forall\, g \in G; \\[0.3em]
\widetilde{w}_{i,g}(x_{1,h},\ldots, x_{n,h}) = x_{i,hg} ,\quad \forall\, g,h \in G
\end{array}
\end{array}
\!\!\!\!\right\rangle 
\end{equation}
that is endowed with a $G$-action via automorphisms given by $g(x_{i,h})=x_{i,gh}$. This makes the presentation \eqref{eq:sym_pres} $G$-symmetric. 

Since $$x_{i,g}=x_{i,eg}=\widetilde{w}_{i,g}(x_{1,e},\ldots, x_{n,e}),$$ the group $\widetilde{M}$ is generated by the elements $x_{i,e},$ $i=1,\ldots,n,$ and it is easily seen to be isomorphic to $M$. Indeed, the map $f\colon M\to \widetilde{M}$ given by $f(m_i)=x_{i,e}$ is a $G$-equivariant isomorphism; given $g\in G$
\begin{align*}
     g(f(m_i))&=g(x_{i,e})=x_{i,ge}=x_{i,g},\\
    f(g(m_i)) &=f(\widetilde{w}_{i,g}(m_1,\ldots,m_n))=\widetilde{w}_{i,g}(x_{1,e},\ldots,x_{n,e})=x_{i,eg}=x_{i,g},
\end{align*}
thus $g(f(m_i)=f(g(m_i)).$  Therefore \eqref{eq:sym_pres} is a $G$-symmetric presentation of $M.$ 
\end{proof}

By \cite[Corollary 1.28]{hatcher}, we know there is a CW-complex $W$ whose fundamental group is $M$. The construction of $W$ depends on a chosen presentation of $M$ and goes as follows: there is a single $0$-cell; the $1$-cells correspond to the generators of the (chosen) presentation of $M$; and the $2$-cells correspond to the relations of such presentation, with the attaching maps being prescribed by the relations. Hence, if we take $M$ with the $G$-symmetric presentation (\ref{eq:sym_pres}) from the proof of Lemma \ref{lemma:symm:pres}, then $W$ inherits some additional properties: $G$ acts by cellular homeomorphisms on $W$ and the induced action of $G$ on the fundamental group of $W$, $\pi_1(W)\cong M$, is precisely the action of $G$ on $M$.

Our aim in Section \ref{section:simp:comp} is to eliminate some of the automorphisms of $W$ in such a way that the fundamental group $\pi_1(W)$ does not change. Since it is easier to work with automorphisms of (abstract) simplicial complexes, we will work in that setting. No issues arise from such transition. Indeed, we can assume $W$ is a $\Delta$-complex, as the following lemma shows. We follow the definition of $\Delta$-complex appearing in \cite[Section 2.1]{hatcher} and recall that $\Delta$-complexes were introduced by Eilenberg and Zilber in \cite{eilenber:zilber:semi} under the name \textit{semi-simplicial complexes}.

\begin{lemma} \label{lemma:Delta:cx}
Given a finite group $G$ acting on a finitely presentable group $M$, there is a $\Delta$-complex $Y$ such that:
\begin{enumerate}[(i)]
    \item $\pi_1(Y)\cong M$;
    \item $G$ acts on $Y$ by homeomorphisms;
    \item $G \curvearrowright M$ is equivalent to $G \curvearrowright \pi_1(Y)$.
\end{enumerate}
Furthermore, the action of $G$ on $Y$ takes $n$-simplices to $n$-simplices by the canonical linear homeomorphism preserving orientation.
\end{lemma}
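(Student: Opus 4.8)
The plan is to equip the presentation complex $W$ constructed above with a $\Delta$-complex structure obtained by triangulating each $2$-cell, and to carry out this triangulation $G$-equivariantly so that the resulting action is simplicial and orientation-preserving. Since only the cell structure is refined while the underlying space is left unchanged, assertions (i)--(iii) will be inherited directly from the properties of $W$ recorded after Lemma~\ref{lemma:symm:pres}.

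First I would spell out the structure of $W$ associated with a $G$-symmetric presentation $M=\langle x_1,\dots,x_n \mid w_j=1,\ 1\le j\le r\rangle$: it has a single $0$-cell $v$ (necessarily $G$-fixed), one $1$-cell $e_i$ per generator $x_i$, attached as a loop at $v$ and oriented so that $g\cdot e_i = e_{\xi(g)(i)}$ by condition (i), and one $2$-cell $D_j$ per relation $w_j$, attached along the loop spelled by $w_j$. The $1$-skeleton is already a legitimate $\Delta$-complex (a wedge of circles, each realized by a single oriented edge), on which $G$ acts by orientation-preserving linear isomorphisms of edges.

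The heart of the argument is the triangulation of the $2$-cells. Writing $\ell_j$ for the length of $w_j$, I would realize $D_j$ as an $\ell_j$-gon with boundary subdivision points $p_{j,0},\dots,p_{j,\ell_j-1}$ (all mapping to $v$), cone it from a fresh central vertex $c_j$, and thereby split $D_j$ into $\ell_j$ triangles $\sigma_{j,k}$ with ordered vertices $(c_j,p_{j,k-1},p_{j,k})$. This gives a $\Delta$-complex $Y$ whose underlying space is $W$. The crucial point is that this can be done compatibly with $G$: condition (ii) of a $G$-symmetric presentation asserts the \emph{letter-by-letter} identity $w_j(x_{\xi(g)(1)},\dots,x_{\xi(g)(n)}) = w_{\rho(g)(j)}(x_1,\dots,x_n)$, so $g$ carries the $k$-th letter of $w_j$ to the $k$-th letter of $w_{\rho(g)(j)}$ with the same exponent. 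Consequently $g$ sends the boundary of $D_j$ to the boundary of $D_{\rho(g)(j)}$ preserving the cyclic order of the subdivision points and the direction of traversal of each boundary edge. I would therefore set $g\cdot c_j = c_{\rho(g)(j)}$ and let $g$ send $\sigma_{j,k}$ to $\sigma_{\rho(g)(j),k}$ by the linear map matching ordered vertices; since $\rho$ is a homomorphism and the boundary edges and their orientations correspond correctly, these maps glue to a well-defined simplicial $G$-action on $Y$ taking each $n$-simplex to an $n$-simplex by the canonical orientation-preserving linear homeomorphism, as required.

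Finally, because $Y$ and $W$ coincide as topological spaces and carry the same $G$-action, properties (i), (ii) and (iii) follow at once from the corresponding properties of $W$. The step I expect to be the main obstacle is verifying the global consistency and orientation-preservation of the equivariant triangulation, namely that the new vertices $c_j$, the internal edges $c_jp_{j,k}$, and the triangles are identified compatibly across all cells under the $G$-action. This is precisely where the letter-by-letter matching of condition (ii) is indispensable: a weaker symmetry matching relations only up to cyclic permutation or inversion would force $g$ to permute or reverse the boundary subdivision points, destroying the orientation-preserving vertex correspondence on the triangles.
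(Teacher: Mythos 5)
Your proposal is correct and takes essentially the same route as the paper: both triangulate each relation $2$-cell of the presentation complex by coning from a central (barycentric) vertex and use the letter-by-letter form of condition (ii) of a $G$-symmetric presentation to see that $G$ permutes the resulting oriented triangles by the canonical orientation-preserving linear homeomorphisms. The paper additionally records the minor reduction to relators with exponents $\pm 1$ and the degenerate bigon case $\ell_j=2$, but these are details rather than differences in approach.
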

\begin{proof}
    Assume $M$ has a $G$-symmetric presentation
    $$M \cong \langle x_1,\ldots, x_n\, |\, w_j(x_1,\ldots, x_n)=1, 1\leq j\leq r\rangle$$
    with permutation representations given by $\xi:G \rightarrow \Sigma_n$ and $\rho:G \rightarrow \Sigma_r$. Suppose, additionally, that the letters of the words $w_j$ only have exponents $\pm 1$ and let $l_j$ denote the length of the word $w_j$. 
    
    Consider the $l_j$-polygon (with interior) $P_j$ whose edges are the letters of the word $w_j$ oriented according to the exponents that appear in $w_j$. If $l_j=2 $, then $P_j$ has two vertices and two edges between them. In this case, $P_j$ is not a polygon in the usual sense, but no issues arise from it. Now, we triangulate each of the polygons $P_j$ by adding a vertex, the barycenter of $P_j$, and edges between the vertices and the barycenter (oriented towards the barycenter). Let $S(P_j)$ denote the set of triangles of $P_j$ (and think of them as disjoint oriented topological $2$-simplices). Then, $Y$ is the following quotient
    $$Y= \left(\bigsqcup_{j=1}^r S(P_j) \sqcup \bigsqcup_{\text{free generators}}m_{k_i} \right) \big/ \sim$$
    where the second union runs over all free generators - those that do not appear in any relation; $m_{k_i}$ denotes both the generator and a loop - a $1$-simplex starting and ending at the same point; and $\sim$ identifies:
    \begin{itemize}
        \item the boundary of the $2$-simplices of the triangulation of each polygon $P_j$ in the obvious way that recovers the polygons;
        \item all the vertices (ends of edges of the boundary) of the polygons $P_j$ and the vertices of the loops $m_{k_i}$ to a single point;
        \item the edges labeled with the same letter.
    \end{itemize} 
    All the identifications of subsimplices are done according to the canonical linear homeomorphism that preserves orientation, hence $Y$ is a $\Delta$-complex. See \cite[Section 2.1]{hatcher} for more details on this characterization of $\Delta$-complexes.

    Clearly, $\pi_1(Y) \cong M$, since without the triangulation $Y$ is just a CW-complex with a single $0$-cell, a $1$-cell for each generator and a $2$-cell for each relation, whose attaching map is prescribed by the relation. On the other hand, $Y$ comes equipped with a $G$-action that permutes labeled edges and polygons. This action has a fixed point: the only point that belongs to every simplex of $Y$, so it induces an action on $\pi_1(Y)\cong M$, which is equivalent to $G \curvearrowright M$.
\end{proof}
By \cite[Chapter 2; Exercise 23]{hatcher}, if we apply barycentric subdivision twice to a $\Delta$-complex of Lemma \ref{lemma:Delta:cx}, we reduce to the desired setting of simplicial complexes. We summarize this in the following theorem.
\begin{theorem} \label{simp:comp:covering}
    Let $G$ be a finite group acting on a finitely presentable group $M$. Then there is an (abstract) simplicial complex $\mathbf{K}$ such that: 
    \begin{enumerate}[label={\rm (\roman{*})}]
        \item\label{simp:comp:covering:1} $\pi_1(\vert \mathbf{K} \vert )\cong M$;
        \item\label{simp:comp:covering:2} $G$ acts simplicially on $\vert \mathbf{K} \vert$, hence $G \leq \aut(\mathbf{K})$;
        \item\label{simp:comp:covering:3} $G \curvearrowright M$ is equivalent to $ G \curvearrowright \pi_1(\vert \mathbf{K} \vert)$.
    \end{enumerate}
\end{theorem}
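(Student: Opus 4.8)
The plan is to bridge the already-established $\Delta$-complex construction of Lemma~\ref{lemma:Delta:cx} with the category of abstract simplicial complexes by passing through the barycentric subdivision functor. The statement of Theorem~\ref{simp:comp:covering} is essentially a translation result: all three conclusions $\ref{simp:comp:covering:1}$--$\ref{simp:comp:covering:3}$ already hold for the $\Delta$-complex $Y$ produced by Lemma~\ref{lemma:Delta:cx}, so the only real work is to convert $Y$ into an honest (abstract) simplicial complex while preserving (i) its homotopy type, (ii) its $G$-action by homeomorphisms, and (iii) the equivalence of the induced action on $\pi_1$ with $G \curvearrowright M$.

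First I would invoke \cite[Chapter 2; Exercise 23]{hatcher}, as the excerpt already signals: applying barycentric subdivision $\sd$ to a $\Delta$-complex yields a \emph{simplicial} $\Delta$-complex after one subdivision and a genuine simplicial complex (i.e.\ one in which every simplex is uniquely determined by its vertex set and no two distinct simplices share all their vertices) after a second subdivision. I would therefore set $\mathbf{K}$ to be the abstract simplicial complex underlying $\sd^2 Y$, using that for a bona fide simplicial complex the combinatorial (abstract) data and the topological realization determine one another, so $\vert \mathbf{K}\vert \cong \sd^2 Y$ as spaces.

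The key steps, in order, are then the following. (1) Homotopy type and fundamental group: since barycentric subdivision does not change the geometric realization up to canonical homeomorphism, $\vert \mathbf{K}\vert \cong \vert Y \vert$, whence $\pi_1(\vert \mathbf{K}\vert) \cong \pi_1(Y) \cong M$, giving $\ref{simp:comp:covering:1}$. (2) Functoriality of the action: the crucial point is that $\sd$ is a functor on the relevant category, and Lemma~\ref{lemma:Delta:cx} guarantees that $G$ acts on $Y$ sending $n$-simplices to $n$-simplices via the canonical orientation-preserving linear homeomorphism; such a simplicial (affine) action commutes with barycentric subdivision, so the $G$-action transports to a simplicial action on $\sd^2 Y = \vert \mathbf{K}\vert$. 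This is precisely what lets me conclude $G$ acts simplicially and hence $G \leq \aut(\mathbf{K})$, i.e.\ $\ref{simp:comp:covering:2}$. (3) Equivariance on $\pi_1$: because the homeomorphism $\vert \mathbf{K}\vert \cong \vert Y\vert$ is $G$-equivariant and fixes the distinguished basepoint (the central fixed point of Lemma~\ref{lemma:Delta:cx} survives subdivision), the induced isomorphism $\pi_1(\vert \mathbf{K}\vert) \cong \pi_1(Y)$ intertwines the two $G$-actions, so $G \curvearrowright \pi_1(\vert \mathbf{K}\vert)$ is equivalent to $G \curvearrowright \pi_1(Y)$, which Lemma~\ref{lemma:Delta:cx} already identifies with $G \curvearrowright M$; this yields $\ref{simp:comp:covering:3}$.

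The main obstacle I anticipate is purely bookkeeping rather than conceptual: verifying that the $G$-action genuinely descends to a \emph{simplicial} automorphism action on the abstract complex $\mathbf{K}$, as opposed to merely a self-homeomorphism of the space $\vert \mathbf{K}\vert$. The subtlety is that $\aut(\mathbf{K})$ is a group of combinatorial, vertex-permuting maps, so I must check that each $g \in G$ permutes the vertices of $\sd^2 Y$ (which are the iterated barycenters of simplices of $Y$) in a way compatible with the face relation. This follows from the fact that $g$ acts on $Y$ by orientation-preserving simplicial homeomorphisms, so it permutes simplices of $Y$, hence their barycenters, hence the simplices of $\sd Y$ and $\sd^2 Y$; the second subdivision additionally ensures that the realization is a simplicial complex in the strict sense so that every such vertex permutation is recorded as an element of $\aut(\mathbf{K})$. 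Once this compatibility is checked, the passage from $\Delta$-complex to simplicial complex is clean and the three conclusions follow immediately from the corresponding conclusions of Lemma~\ref{lemma:Delta:cx}.
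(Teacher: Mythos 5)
Your proposal is correct and follows essentially the same route as the paper: the paper's proof of this theorem is precisely the one-line observation that applying barycentric subdivision twice to the $\Delta$-complex $Y$ of Lemma~\ref{lemma:Delta:cx} (via \cite[Chapter 2; Exercise 23]{hatcher}) produces a genuine simplicial complex, with all three properties carried over. Your additional verification that the affine $G$-action permutes barycenters and hence descends to a combinatorial action on $\sd^2 Y$, and that the fixed basepoint survives subdivision, simply makes explicit the bookkeeping the paper leaves implicit.
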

Despite the similarities between Theorem~\ref{simp:comp:covering} and Theorem~\ref{main:theorem}, the latter remains far from being established. In the following example, we show there is a finitely presentable group $M$, on which $G=\Z/2$ acts, and an (abstract) simplicial complex $\mathbf{K}$ satisfying all the properties of Theorem~\ref{simp:comp:covering} while $\E(\vert \mathbf{K} \vert) \neq \Z/2$.
\begin{example}
Fix $n \geq 2$ and let $H$ be the group given by the following presentation
$$\langle a, t \mid t^{-1}a^{-1}ta^nt^{-1}at = a \rangle.$$
A larger class of groups that includes $H$ was studied by A. M. Brunner in \cite{brunner}, where it is shown that the full automorphism group of $H$ is not finitely generated. Furthermore, A.M. Brunner and J. G. Ratcliffe prove in \cite{brunner:ratcliffe} that the space with a single $0$-cell, two $1$-cells and a single $2$-cell attached by the relation in $H$ is an Eilenberg-Maclane space $K(H,1)$.

Denote the free product of $H$ with itself by $M=H \ast H$ and endow $M$ with the $G=\Z/2$-action that switches generators and relations between the two copies of $H$ within $M$. Then, Theorem~\ref{simp:comp:covering} applied to this action gives a simplicial complex $\mathbf{K}$ whose geometric realization is homotopy equivalent to $K(H,1) \vee K(H,1)$. Since $\aut(H) \cong \E(K(H,1)) < \E(\vert \mathbf{K}\vert)$, we conclude that $\E(\vert \mathbf{K}\vert)$ is not finitely generated while $G=\Z/2$ is the smallest non-trivial finite group.
\end{example}
\subsection{Auxiliary simplicial complexes}

For the convenience of the reader, we recall the notation about simplicial complexes. Calligraphic letters such as $\mathbf{K}$ denote an (abstract) simplicial complex and $V(\mathbf{K})$ its vertex set. The geometric realization functor from the category of (abstract) simplicial complexes to topological spaces is denoted by $\vert \cdot \vert: \asc \longrightarrow \top$ while $\mathbf \Delta(A)$ is the standard simplex whose vertices are the elements of $A$.  Whenever we define a simplicial complex by its set of simplices, it is implied that the vertex set consists of all the elements that appear in the simplices (regarded as sets). 

The main objective of Section \ref{section:simp:comp} is perturbing a simplicial complex $\mathbf{K}$ from Theorem \ref{simp:comp:covering} in such a way that the full automorphism group of the new simplicial complex is the subgroup $G$ of $\aut (\mathbf{K})$ while the homotopy type of the geometric realization $\vert \mathbf{K} \vert$ remains unchanged. The rest of the current section is devoted to recording auxiliary lemmas about properties of automorphisms of simplicial complexes and defining specific simplicial complexes that will be useful to us in the aforementioned process.

\begin{lemma} \label{points:auto:simplices}
    Let $f$ be an automorphism of $\mathbf{K}$. If $x \in V(\mathbf{K})$ is exactly in $k$ $n$-simplices, then $f(x) \in V(\mathbf{K})$ is exactly in $k$ $n$-simplices for all $k \geq 0$ and $n \geq 1$.
\end{lemma}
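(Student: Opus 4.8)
The plan is to observe that an automorphism of an abstract simplicial complex is, by definition, a bijection of the vertex set that preserves the simplex structure in both directions, and to leverage this bijectivity on the collection of $n$-simplices.

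First I would recall what $f$ being an automorphism of $\mathbf{K}$ means: $f\colon V(\mathbf{K}) \to V(\mathbf{K})$ is a bijection such that a subset $\sigma \subseteq V(\mathbf{K})$ is a simplex of $\mathbf{K}$ if and only if $f(\sigma)$ is a simplex of $\mathbf{K}$. In particular, since $f$ carries simplices to simplices bijectively and preserves cardinality of the vertex sets, $f$ restricts to a bijection on the set of $n$-simplices of $\mathbf{K}$ for each fixed $n$: if $\sigma = \{v_0, \ldots, v_n\}$ is an $n$-simplex, then $f(\sigma) = \{f(v_0), \ldots, f(v_n)\}$ has exactly $n+1$ distinct vertices (as $f$ is injective) and is again a simplex, hence an $n$-simplex; and this assignment is invertible via $f^{-1}$.

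Next I would set up the counting correspondence. Fix $x \in V(\mathbf{K})$ and let $\mathcal{S}_n(x)$ denote the set of $n$-simplices containing $x$. The key step is to check that $f$ restricts to a bijection from $\mathcal{S}_n(x)$ onto $\mathcal{S}_n(f(x))$. Indeed, if $x \in \sigma$ and $\sigma$ is an $n$-simplex, then $f(x) \in f(\sigma)$ and $f(\sigma)$ is an $n$-simplex, so $f$ maps $\mathcal{S}_n(x)$ into $\mathcal{S}_n(f(x))$; conversely $f^{-1}$ maps $\mathcal{S}_n(f(x))$ into $\mathcal{S}_n(x)$, using that $f^{-1}$ is also an automorphism. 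Since $f$ and $f^{-1}$ are mutually inverse maps between these two finite sets, they are bijections, whence $\vert \mathcal{S}_n(x) \vert = \vert \mathcal{S}_n(f(x)) \vert$. Taking $\vert \mathcal{S}_n(x) \vert = k$ gives that $f(x)$ lies in exactly $k$ $n$-simplices, which is the claim.

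This argument is entirely formal, so I do not anticipate any genuine obstacle; the only point requiring a moment of care is the verification that $f$ sends an $n$-simplex to an $n$-simplex (and not to a simplex of lower dimension), which is exactly where the injectivity of $f$ is used to guarantee that the image vertex set still has $n+1$ elements. The statement is symmetric in $f$ and $f^{-1}$, so the equality of cardinalities follows immediately once the two containments are established.
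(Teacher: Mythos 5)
Your proof is correct and follows essentially the same route as the paper's: both use that $f$ and $f^{-1}$ preserve simplices to get the two inequalities (or equivalently the bijection between the $n$-simplices containing $x$ and those containing $f(x)$). Your version merely spells out in more detail why an $n$-simplex is sent to an $n$-simplex rather than one of lower dimension, which the paper takes for granted.
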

\begin{proof}
    Since $f$ is an automorphism of $\mathbf{K}$, it is a map from the vertex set $V(\mathbf{K})$ to itself that preserves simplices in the sense that if $\{v_0, \dots, v_n \}$ is a simplex of $\mathbf{K}$, then $\{f(v_0), \dots, f(v_n) \}$ is a simplex (of the same dimension) of $\mathbf{K}$. This implies directly that if $x \in V(\mathbf{K})$ is in exactly $k$ $n$-simplices, then $f(x)$ is in at least $k$ $n$-simplices. Applying the same reasoning to the automorphism $f^{-1}$, we get the result.
\end{proof}
We say that an $(n+1)$-tuple $(v_0, \dots, v_n) \in V(\mathbf{K})^{n+1}$ is a \textit{path of length $n$} of $\mathbf{K}$ if $\{v_i,v_{i+1}\} \in \mathbf{K}$ for all $0 \leq i \leq n-1$, that is, $\{v_i,v_{i+1}\}$ is a 1-simplex of $\mathbf{K}$ for all $0 \leq i \leq n-1$.

\begin{lemma}\label{paths:auto:simplices}
    Let $f$ be an automorphism of $\mathbf{K}$. Then $(v_0, \dots, v_n) \in V(\mathbf{K})^{n+1}$ is a path of length $n$ if and only if $(f(v_0), \dots, f(v_n)) \in V(\mathbf{K})^{n+1}$ is a path of length $n$ of $\mathbf{K}$
\end{lemma}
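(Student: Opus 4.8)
The plan is to prove this as an immediate consequence of Lemma~\ref{points:auto:simplices} together with the definition of a simplicial automorphism, reducing the statement about paths to a statement about individual $1$-simplices. An automorphism $f$ of $\mathbf{K}$ is, by definition, a bijection on the vertex set $V(\mathbf{K})$ that carries simplices to simplices of the same dimension, and whose inverse $f^{-1}$ does the same. The key observation is that being a path of length $n$ is a purely local condition: $(v_0,\dots,v_n)$ is a path precisely when each of the $n$ consecutive pairs $\{v_i,v_{i+1}\}$ is a $1$-simplex of $\mathbf{K}$.

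First I would fix a path $(v_0,\dots,v_n) \in V(\mathbf{K})^{n+1}$ and consider its image $(f(v_0),\dots,f(v_n))$. For each index $0 \leq i \leq n-1$, the hypothesis gives that $\{v_i,v_{i+1}\}$ is a $1$-simplex of $\mathbf{K}$; since $f$ preserves simplices, $\{f(v_i),f(v_{i+1})\}$ is a $1$-simplex of $\mathbf{K}$ as well. As this holds for every consecutive pair, $(f(v_0),\dots,f(v_n))$ satisfies the defining condition of a path of length $n$. This establishes the forward implication.

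For the converse, I would apply exactly the same argument to the automorphism $f^{-1}$, which is again an automorphism of $\mathbf{K}$: if $(f(v_0),\dots,f(v_n))$ is a path, then applying $f^{-1}$ to it recovers $(v_0,\dots,v_n)$ and shows it is a path. (Equivalently, one notes the correspondence $(v_0,\dots,v_n) \mapsto (f(v_0),\dots,f(v_n))$ is a bijection on $V(\mathbf{K})^{n+1}$ whose action on each consecutive pair preserves the property of being a $1$-simplex in both directions.) Combining the two directions yields the stated equivalence.

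I do not expect any genuine obstacle here; the content is entirely formal, mirroring the proof of Lemma~\ref{points:auto:simplices}, and the only thing to be careful about is to invoke $f^{-1}$ explicitly for the ``if'' direction so that the biconditional is fully justified rather than just one inclusion. The single subtlety worth flagging is that $f$ need not fix the length or even map distinct vertices to distinct ones within a degenerate path with repetitions, but since $f$ is a bijection on vertices this causes no difficulty: $f(v_i) = f(v_{i+1})$ if and only if $v_i = v_{i+1}$, so the combinatorial structure of the tuple is preserved verbatim.
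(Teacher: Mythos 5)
Your proof is correct and follows exactly the paper's (one-line) argument: the forward implication because $f$ preserves $1$-simplices, and the converse by applying the same reasoning to $f^{-1}$. The extra remarks about repeated vertices are harmless but unnecessary, since the path condition only concerns consecutive pairs being $1$-simplices.
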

\begin{proof}
    This follows directly from the fact that $f$ and $f^{-1}$ preserve 1-simplices.
\end{proof}

We define $\mathbf{P_k}$ to be the simplicial complex whose set of vertices is $V(\mathbf{P_k})=\{u_0 \dots,u_k\}$ and whose simplices are given by 
$$\mathbf{P_k}= \bigcup_{i= 0}^{k-1}\{u_i,u_{i+1}\}.$$ 
This simplicial complex essentially corresponds to a single path of length $k$. With that in mind, $\mathbf{P_k}$ can be described as the following sequence of pushouts of the standard $1$-simplex:
$$ \mathbf{\Delta}(u_0,u_1) \bigsqcup_{u_1} \mathbf{\Delta}(u_1,u_2) \bigsqcup_{u_2}\dots \bigsqcup_{u_{k-1}}\mathbf{\Delta}(u_{k-1},u_k).$$
\begin{definition} \label{simp:comp:W_k,k+1}
    Let $\mathbf{W_k}$ be the simplicial complex given by the following pushout

     \begin{equation}
            \begin{tikzcd}
                \ast \bigsqcup \ast \arrow[r,"v_0 \sqcup v_1"] \arrow[d,"u_0 \sqcup u_0"] & \mathbf{\Delta}(\{v_0,v_1,v_2\}) \arrow[d] \\
                \mathbf{P_k} \bigsqcup \mathbf{P_{k+1}} \arrow[r] & \mathbf{W_k}  \arrow[ul, phantom, "\text{\Huge$\ulcorner$}", pos=0]
            \end{tikzcd} \nonumber
        \end{equation}
where the symbol $\bigsqcup$ denotes the disjoint union.
    More explicitly, after relabeling the vertices of $\mathbf{P_{k+1}}$, $\mathbf{W_k}$ is given by:
    $$\mathbf{W_k}= \mathbf\Delta(\{v_0,v_1,v_2\}) \bigcup \{v_0,u_1 \} \cup \bigcup_{i=1}^{k-1} \{u_i, u_{i+1} \} \bigcup \{v_1,t_1 \} \cup \bigcup_{i=1}^{k} \{t_i, t_{i+1} \}.$$
\end{definition}
In simpler terms, $\mathbf{W_k}$ consists of a standard $2$-simplex with an added path of length $k$ in one of the points and an added path of length $k+1$ in another of the points. The geometric simplicial complex corresponding to $\mathbf{W_2}$ can be found in Figure \ref{fig:rigid_contractible_simp_complex}.
\begin{lemma}
    Let $\mathbf{W_k}$ be the simplicial complex from Definition \upshape\ref{simp:comp:W_k,k+1}. Then the following holds:
    \begin{enumerate}[label={\rm (\roman{*})}]
        \item $\vert \mathbf{W_k} \vert$ is contractible;
        \item $\mathbf{W_k} $ is rigid, i.e., $\aut(\mathbf{W_k})=\{1\}$.
    \end{enumerate}
\end{lemma}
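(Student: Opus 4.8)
The plan is to verify the two claims about $\mathbf{W_k}$ separately, starting with contractibility and then turning to rigidity, which I expect to be the more delicate point.

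For contractibility, I would argue directly from the construction. The complex $\mathbf{W_k}$ is a standard $2$-simplex $\mathbf{\Delta}(\{v_0,v_1,v_2\})$ with two paths attached, one of length $k$ at the vertex $v_0$ and one of length $k+1$ at the vertex $v_1$. The geometric realization $\vert \mathbf{\Delta}(\{v_0,v_1,v_2\}) \vert$ is a filled triangle, hence contractible. Each attached path $\mathbf{P_k}$ and $\mathbf{P_{k+1}}$ has contractible geometric realization (it is a line segment), and each is glued to the triangle along a single vertex. Since gluing a contractible space to a contractible space along a single point yields a contractible space, an elementary induction on the number of attached edges — or a direct application of the gluing lemma for the pushout defining $\mathbf{W_k}$, using that the maps $v_0 \sqcup v_1$ and $u_0 \sqcup u_0$ are cofibrations — shows $\vert \mathbf{W_k} \vert$ is contractible. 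Alternatively, one can collapse each free edge at the end of a path (an elementary collapse, removing a vertex contained in exactly one edge) repeatedly until only the triangle remains, which is a sequence of simple-homotopy equivalences.

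For rigidity, the idea is to pin down every vertex using the combinatorial invariants preserved by automorphisms, namely the number of $n$-simplices containing each vertex (Lemma \ref{points:auto:simplices}) together with adjacency (Lemma \ref{paths:auto:simplices}). First I would observe that only the three vertices $v_0, v_1, v_2$ lie in a $2$-simplex, so any automorphism $f$ permutes $\{v_0, v_1, v_2\}$ and fixes setwise the union of the two attached paths. Among $v_0, v_1, v_2$, the vertex $v_2$ is the unique one belonging to no $1$-simplex outside the triangle (it has degree $2$), whereas $v_0$ and $v_1$ each have an extra edge attached; hence $f(v_2) = v_2$. To distinguish $v_0$ from $v_1$, I would use that $v_0$ carries a path of length $k$ and $v_1$ carries a path of length $k+1$: the two attached paths have different lengths, so the connected component of the edge-attachment at $v_0$ (a dangling path with $k$ edges) cannot be mapped to that at $v_1$ (with $k+1$ edges), as $f$ preserves path length by Lemma \ref{paths:auto:simplices}. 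This forces $f(v_0)=v_0$ and $f(v_1)=v_1$.

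Once the three triangle vertices are fixed, rigidity of the paths follows: each path is a sequence of vertices of successively increasing distance from its basepoint ($v_0$ or $v_1$), and the endpoint of each path is the unique degree-$1$ vertex in its component. Walking inward from the fixed basepoint along the unique path, each intermediate vertex $u_i$ (resp. $t_i$) is adjacent to exactly its two neighbors and is the unique vertex at combinatorial distance $i$ from $v_0$ (resp. $v_1$) within the dangling path; since $f$ fixes the basepoint and preserves adjacency and distance, it must fix each $u_i$ and each $t_i$ in turn. Therefore $f$ is the identity on all of $V(\mathbf{W_k})$, giving $\aut(\mathbf{W_k}) = \{1\}$. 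The main obstacle is simply organizing the degree and distance bookkeeping cleanly; the asymmetry built into the construction — a triangle plus two paths of deliberately different lengths $k$ and $k+1$ — is precisely what breaks all potential symmetries, and once that asymmetry is exploited the argument is routine.
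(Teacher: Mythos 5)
Your proposal is correct and follows essentially the same route as the paper: contractibility via collapsing the attached paths onto the triangle, and rigidity by first pinning down $v_2$ as the unique triangle vertex with exactly two incident edges and then using the differing lengths $k$ and $k+1$ of the attached paths (via Lemmas \ref{points:auto:simplices} and \ref{paths:auto:simplices}) to fix $v_0$, $v_1$, and all remaining vertices. Your write-up merely spells out the distance bookkeeping along the dangling paths in more detail than the paper does.
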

\begin{proof}
    The geometric realization $\vert \mathbf{W_k} \vert$ is contractible, since it consists of the pointed union (in different points) of a standard topological simplex with distinct intervals. 
    
    Every automorphism of $\mathbf{W_k}$ fixes $v_2$, because it is the only vertex in a $2$-simplex and in exactly $2$ edges. Using lemmas \ref{points:auto:simplices} and \ref{paths:auto:simplices}, we get that every automoprphism fixes the remaining points, since $(v_1,t_1, \dots, t_{k+1})$ is the only path of length $k+1$ starting at a point in a $2$-simplex and whose remaining points are not in a $2$-simplex.
\end{proof}
\begin{figure}[h]
\centering
\begin{tikzpicture}

        \coordinate (v0) at (0,0);
        \coordinate (v1) at (3,0);
        \coordinate (v2) at (1.5,2);
        
        \coordinate (u1) at (-1,0);
        \coordinate (u2) at (-2,0);
        
        \coordinate (t1) at (4,0);
        \coordinate (t2) at (5,0);
        \coordinate (t3) at (6,0);
        
        \filldraw[fill=blue!20, draw=black] (v0) -- (v1) -- (v2) -- cycle;
        
        \draw[thick] (v0) -- (v1);
        \draw[thick] (v1) -- (v2);
        \draw[thick] (v2) -- (v0);
        
        \draw[thick] (v0) -- (u1) -- (u2); 
        \draw[thick] (v1) -- (t1) -- (t2) -- (t3); 

        \foreach \point in {v0,v1,v2,u1,u2,t1,t2,t3}
            \fill (\point) circle (2pt);

        \node[below]  at (v0) {$v_0$};
        \node[below] at (v1) {$v_1$};
        \node[above] at (v2) {$v_2$};

        \node[below] at (u1) {$u_1$};
        \node[below] at (u2) {$u_2$};

        \node[below] at (t1) {$t_1$};
        \node[below] at (t2) {$t_2$};
        \node[below] at (t3) {$t_3$};

    \end{tikzpicture}
\caption{Geometric simplicial complex $\mathbf{W_2}$}
        \label{fig:rigid_contractible_simp_complex}
\end{figure}
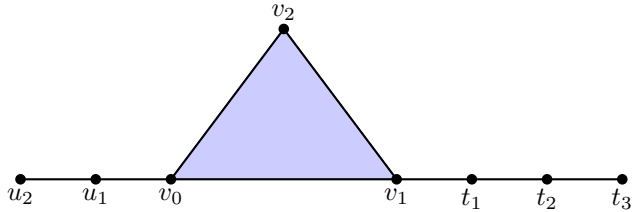

The simplicial complexes $\mathbf{W_k}$ from Definition \ref{simp:comp:W_k,k+1} will be used to construct a simplicial complex whose geometric realization is homotopy equivalent to $\vert \mathbf{K} \vert$ and whose (full) automorphism group is trivial. The simplicial complexes defined below will appear in the construction of the $G$-rigidification of a simplicial complex $\mathbf{K}$, that appears as Definition \ref{def:G:rigidif}.

\begin{definition}
    Define $\mathbf{U_{i}^n}$, for $n\geq 3$, to be the $n$-dimensional simplicial complex given by the following pushout

     \begin{equation}
            \begin{tikzcd}
                \bigsqcup_{j=0}^{n-3} \ast \arrow[r,"w_i \sqcup_j v_j"] \arrow[d,"\sqcup u_0"] & \mathbf \Delta(\{x_{i-1},x_i,y_i,w_i,v_1, \dots,v_{n-3}\}) \arrow[d] \\
                \bigsqcup_{j=1}^{n-3} \mathbf{P_{j}} \arrow[r] & \mathbf{U_{i}^n}  \arrow[ul, phantom, "\text{\Huge$\ulcorner$}", pos=0]
            \end{tikzcd} \nonumber
        \end{equation}
        
   This simplicial complex consists of a standard $n$-simplex to which a path of different length is glued in all but three points: $x_{i-1}$, $x_i$ and $y_i$. Observe that $\mathbf{U_{i}^n}$ is not rigid, but every automorphism of $\mathbf{U_{i}^n}$ is determined by the images of $x_{i-1}$, $x_i$ and $y_i$. In Figure \ref{simp:comp:U_k^n}, we can find the corresponding geometric simplicial complex. 
   
   \begin{figure}[h]
        \centering
        \begin{tikzpicture}
    \coordinate (X0) at (0,0);
    \coordinate (X1) at (4,0);

    \coordinate (Z11) at (2,4);

    \coordinate (Y1) at (2,1);

    \coordinate (W1) at (2,2);

    \fill[blue!20] (X0) -- (Y1) -- (X1) -- cycle;
    \fill[blue!20] (X0) -- (Y1) -- (W1) -- cycle;
    \fill[blue!20] (X1) -- (Y1) -- (W1) -- cycle;

    \draw (X0) -- (X1);
    \draw (X0) -- (W1) -- (X1);
    \draw (W1) -- (Z11);
    
    \draw[dashed] (X0) -- (Y1);
    \draw[dashed] (X1) -- (Y1);
    \draw[dashed] (Y1) -- (W1);

    \filldraw [black] (X0) circle (2pt) node[below] {$x_0$};
    \filldraw [black] (X1) circle (2pt) node[below] {$x_1$};
    
    \filldraw [black] (Z11) circle (2pt) node[left] {$u_1$};
    \filldraw [black] (Y1) circle (2pt) node[above left] {$y_1$};
    \filldraw [black] (W1) circle (2pt) node[above left] {$w_1$};

\end{tikzpicture}
    \caption{Geometric realization of $\mathbf{U_1^3}$}
    \label{simp:comp:U_k^n}
\end{figure}
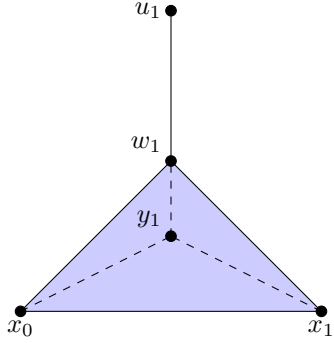 
 \label{def:aux:glued:band}
\end{definition}

\begin{definition} \label{def:glued:band}
Let $\mathbf{B^n_m}$ be the $n$-dimensional simplicial complex given by gluing the simplicial complexes $\mathbf{U^n_{i}}$ of Definition~\ref{def:aux:glued:band} for $1 \leq i \leq m$ subsequently to each other with $2$-simplices. More precisely, $\mathbf{B^n_m}$ is defined as follows
$$\mathbf{B^n_m}=  \left( \mathbf{U_{1}^{n+1}} \bigsqcup_{x_1}  \mathbf{U_{2}^{n+1}} \bigsqcup_{x_2} \dots  \bigsqcup_{x_m} \mathbf{U_{m}^{n+1}} \right) \cup \bigcup_{i=1}^{m-1} \left( \{x_i,w_i,x_{i+1}\} \cup  \{w_i,w_{i+1}\} \right).$$
In Figure \ref{glued:band}, we can find the geometric simplicial complex corresponding to $\mathbf{B_3^3}$, where the points in the paths added to each $\mathbf{U_{i}^3}$  were relabeled, so that they can be distinguished. 

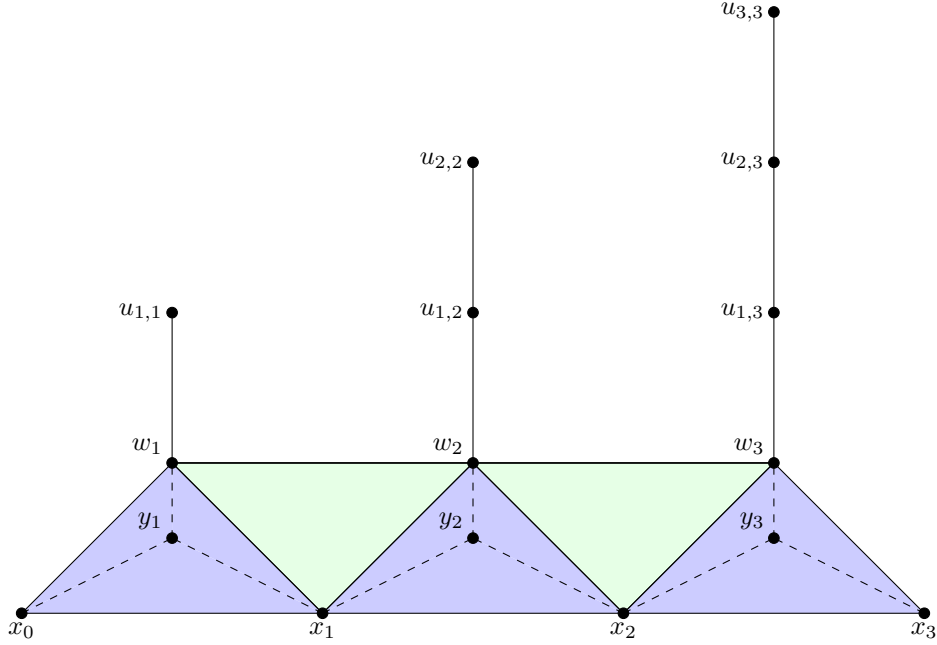
\begin{figure}[h]
        \centering
       \begin{tikzpicture}
    \coordinate (X0) at (0,0);
    \coordinate (X1) at (4,0);
    \coordinate (X2) at (8,0);
    \coordinate (X3) at (12,0);

    \coordinate (Z11) at (2,4);
    \coordinate (Z12) at (6,4);
    \coordinate (Z22) at (6,6);
    \coordinate (Z13) at (10,4);
    \coordinate (Z23) at (10,6);
    \coordinate (Z33) at (10,8);

    \coordinate (Y1) at (2,1);
    \coordinate (Y2) at (6,1);
    \coordinate (Y3) at (10,1);

    \coordinate (W1) at (2,2);
    \coordinate (W2) at (6,2);
    \coordinate (W3) at (10,2);

    \fill[blue!20] (X0) -- (Y1) -- (X1) -- cycle;
    \fill[blue!20] (X1) -- (Y2) -- (X2) -- cycle;
    \fill[blue!20] (X2) -- (Y3) -- (X3) -- cycle;
    \fill[blue!20] (X0) -- (Y1) -- (W1) -- cycle;
    \fill[blue!20] (X1) -- (Y1) -- (W1) -- cycle;
    \fill[blue!20] (X1) -- (Y2) -- (W2) -- cycle;
    \fill[blue!20] (X2) -- (Y2) -- (W2) -- cycle;
    \fill[blue!20] (X2) -- (Y3) -- (X3) -- cycle;
    \fill[blue!20] (X2) -- (Y3) -- (W3) -- cycle;
    \fill[blue!20] (X3) -- (Y3) -- (W3) -- cycle;

    \draw[dashed] (X0) -- (Y1);
    \draw[dashed] (X1) -- (Y1);
    \draw[dashed] (Y1) -- (W1);

    \draw[dashed] (X1) -- (Y2);
    \draw[dashed] (X2) -- (Y2);
    \draw[dashed] (Y2) -- (W2);

    \draw[dashed] (X2) -- (Y3);
    \draw[dashed] (X3) -- (Y3);
    \draw[dashed] (Y3) -- (W3);

    \filldraw[fill=green!10, draw=black] (X1) -- (W1) -- (W2) -- cycle;
    \filldraw[fill=green!10, draw=black] (X2) -- (W2) -- (W3) -- cycle;

    \filldraw [black] (X0) circle (2pt) node[below] {$x_0$};
    \filldraw [black] (X1) circle (2pt) node[below] {$x_1$};
    \filldraw [black] (X2) circle (2pt) node[below] {$x_2$};
    \filldraw [black] (X3) circle (2pt) node[below] {$x_3$};

    \filldraw [black] (Z11) circle (2pt) node[left] {$u_{1,1}$};
    \filldraw [black] (Z12) circle (2pt) node[left] {$u_{1,2}$};
    \filldraw [black] (Z13) circle (2pt) node[left] {$u_{1,3}$};
    \filldraw [black] (Z22) circle (2pt) node[left] {$u_{2,2}$};
    \filldraw [black] (Z23) circle (2pt) node[left] {$u_{2,3}$};
    \filldraw [black] (Z33) circle (2pt) node[left] {$u_{3,3}$};

    \filldraw [black] (Y1) circle (2pt) node[above left] {$y_1$};
    \filldraw [black] (Y2) circle (2pt) node[above left] {$y_2$};
    \filldraw [black] (Y3) circle (2pt) node[above left] {$y_3$};

    \filldraw [black] (W1) circle (2pt) node[above left] {$w_1$};
    \filldraw [black] (W2) circle (2pt) node[above left] {$w_2$};
    \filldraw [black] (W3) circle (2pt) node[above left] {$w_3$};

    \draw (W1) -- (Z11);
    \draw (W2) -- (Z12) -- (Z22);
    \draw (W3) -- (Z13) -- (Z23) -- (Z33);

    \draw (X0) -- (X1) -- (X2) -- (X3);
    \draw (X0) -- (W1) -- (X1) -- (W2) -- (X2) -- (W3) -- (X3);

    \draw (W1) -- (W2);
    \draw (W2) -- (W3);
    \draw (W1) -- (W3);
    
\end{tikzpicture}
    \caption{Geometric realization of $\mathbf{B^3_3}$}
    \label{glued:band}
\end{figure}

\end{definition}

\begin{lemma} \label{lemma:glued:band}
Let $\mathbf{B_m^{n}}$ be the simplicial complex from Definition \upshape\ref{def:glued:band}. Then the following hold:
    \begin{enumerate}[label={\rm (\roman{*})}]
        \item\label{lemma:glued:band:1} $\vert \mathbf{B_m^{n}} \vert$ is contractible.
        \item\label{lemma:glued:band:2} The automorphisms of $\mathbf{B_m^{n}} $ are determined by the images of $y_1$ and $y_m$. Moreover, if these are fixed by an automorphism $f$, then $f$ is the identity.
    \end{enumerate} 
\end{lemma}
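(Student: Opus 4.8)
The plan for (i) is to reduce to a \emph{gluing lemma} computation. First I would deformation retract every dangling path $\mathbf{P}_\bullet$ (attached at the vertices $w_i$ and $v_1,\dots,v_{n-3}$) onto its foot: each such path is free at its far end, so these retractions do not change the homotopy type and leave the subcomplex $C$ consisting of the top simplices $T_i:=\mathbf\Delta(\{x_{i-1},x_i,y_i,w_i,\dots\})$, the connecting $2$-simplices, and the edges $\{w_i,w_{i+1}\}$. I would then assemble $C$ one block at a time and apply the gluing lemma (if $A$, $B$ and $A\cap B$ are contractible then so is $A\cup B$): starting from the contractible $T_1$, adjoining $T_{i+1}$ meets the current complex in the single shared vertex $x_i$, while adjoining the connecting $2$-simplex at stage $i$ meets it in the contractible sub-path $w_i\,x_i\,w_{i+1}$. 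The point is that each edge $\{w_i,w_{i+1}\}$, which on its own would join two already-present vertices and create a loop, is adjoined as a face of its filled connecting $2$-simplex; this fills the potential $1$-cycle and keeps every partial union contractible. Hence $|C|$, and therefore $|\mathbf B_m^n|$, is contractible.

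For (ii) the starting observation is that a simplicial automorphism carries $n$-simplices to $n$-simplices, and the only $n$-simplices of $\mathbf B_m^n$ are the blocks $T_1,\dots,T_m$; hence $f$ induces a permutation $\pi$ of the blocks. Consecutive blocks $T_i,T_{i+1}$ meet in the vertex $x_i$ and are tied together by a connecting $2$-simplex, while non-consecutive blocks are disjoint, so $\pi$ must preserve the adjacency of the path on $m$ nodes and is therefore either the identity or the reversal $i\mapsto m+1-i$. Next, by Lemma~\ref{paths:auto:simplices}, $f$ sends a path of length $\ell$ to a path of length $\ell$; since within each block the attached paths at $w_i,v_1,\dots,v_{n-3}$ have pairwise distinct lengths, whenever $\pi$ fixes a block these decorated vertices are individually fixed and $f$ preserves the set $\{x_{i-1},x_i,y_i\}$ of the three path-free vertices of that block.

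I would prove the second assertion first, as the first follows from it. Assume $f(y_1)=y_1$ and $f(y_m)=y_m$. Since $y_1$ lies only in $T_1$, the relation $y_1\in f(T_1)=T_{\pi(1)}$ forces $\pi(1)=1$, hence (for $m\ge 2$) $\pi=\mathrm{id}$ and every block is mapped to itself. By the previous paragraph each $w_i$ and $v_j$ is fixed and $f$ permutes $\{x_{i-1},x_i,y_i\}$ inside block $i$. Because $x_i$ is the unique common vertex of $T_i$ and $T_{i+1}$, it must be fixed for $1\le i\le m-1$; combined with $f(y_1)=y_1$ this leaves only $x_0$ as the image of $x_0$ in block $1$, so $f(x_0)=x_0$, and symmetrically $f(x_m)=x_m$ using $f(y_m)=y_m$. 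Thus $f$ fixes every $x_i$, every $y_i$ and every decorated vertex, so $f=\mathrm{id}$. Finally, if two automorphisms $f,f'$ have the same images of $y_1$ and $y_m$, then $f^{-1}f'$ fixes $y_1$ and $y_m$ and hence is the identity, so $f=f'$; this is exactly the statement that automorphisms of $\mathbf B_m^n$ are determined by the images of $y_1$ and $y_m$.

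The main obstacle is the combinatorial bookkeeping in the last two paragraphs: one must check that the only incidences between distinct blocks are the shared vertices $x_i$ and the connecting $2$-simplices, so that $\pi$ is genuinely confined to $\{\mathrm{id},\text{rev}\}$, and that the distinct path lengths, via Lemmas~\ref{points:auto:simplices} and~\ref{paths:auto:simplices}, really do pin down all decorated vertices once a block is preserved. The contractibility in (i) is comparatively routine, the only subtlety being to order the gluing so that each edge $\{w_i,w_{i+1}\}$ is filled by its connecting $2$-simplex and no spurious $1$-cycle is ever created.
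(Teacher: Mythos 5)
Your argument is correct in outline and rests on the same ingredients as the paper's proof: Lemmas \ref{points:auto:simplices} and \ref{paths:auto:simplices}, the distinct lengths of the dangling paths, and the fact that $x_i$ is the unique vertex shared by consecutive blocks. The organization differs in one respect worth noting. The paper never passes through the dichotomy ``$\pi$ is the identity or the reversal'': it shows that \emph{every} automorphism maps each block to itself, because $w_i$ is the unique vertex that both lies in a $2$-simplex not contained in any $n$-simplex (a connecting simplex) and starts a dangling path of length exactly $i$, these lengths being distinct \emph{across} blocks; from this it concludes that the only possibly unfixed vertices are $x_0,y_1,x_m,y_m$. Your weaker route, ruling out the reversal only under the hypothesis $f(y_1)=y_1$, $f(y_m)=y_m$ and then invoking the $f^{-1}f'$ trick, is also sufficient for the statement as written.

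One claim should be tightened: ``the attached paths at $w_i,v_1,\dots,v_{n-3}$ have pairwise distinct lengths'' is not quite accurate under the paper's conventions, since the path at $w_i$ has length $i$, which can coincide with the length of the path at $v_i$ when $i\le n-3$. Within a preserved block you therefore cannot separate $w_i$ from the $v_j$ by path length alone; you need the additional observation (which the paper uses) that $w_i$ lies in a connecting $2$-simplex while no $v_j$ does. Without this, an automorphism swapping $w_i$ and $v_i$ together with their paths is not excluded, which would contradict the ``$f$ is the identity'' conclusion. For (i), the paper simply observes that $\vert\mathbf{B^n_m}\vert$ is an iterated wedge of a topological simplex with arcs; your block-by-block gluing-lemma argument is more detailed and correctly isolates the only delicate point, namely that each edge $\{w_i,w_{i+1}\}$ must be adjoined as a face of its filled connecting $2$-simplex so that no essential $1$-cycle survives.
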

\begin{proof}
    Point \ref{lemma:glued:band:1} is clear, since the geometric realization $\vert \mathbf{B_m^{n}} \vert$ consists of several wedge sums of a standard topological simplex with distinct paths. 
    
    In order to show point \ref{lemma:glued:band:2}, observe that $w_i$ is the only point satisfying the following properties simultaneously: 
    \begin{itemize}
        \item $w_i$ belongs to a $2$-simplex which is not a subsimplex of an $n$-simplex;
        \item $w_i$ starts a path of length $i$ of which the remaining points are not in any $2$-simplex;
    \end{itemize}
    where $1 \leq i \leq m $. Therefore, by Lemma \ref{points:auto:simplices}, each $w_i$ is fixed by an automorphism of $\mathbf{B_m^n}$, which implies that each $n$-simplex is mapped to itself. Consequently, the points $x_i$ with $2 \leq i \leq m-1$ are also fixed, as each $x_i$ is the unique point in the intersection of the $n$-simplex containing $w_i$ and the $n$-simplex containing $w_{i+1}.$
    
    Within each $n$-simplex, each $v_j$ for $1 \leq j \leq n-3$ starts a path of distinct length whose remaining points are not in any $2$-simplex, thus, by Lemma \ref{paths:auto:simplices}, each $v_j$ is also fixed by automorphisms of $\mathbf{B_m^n}$. Additionally, the points $y_i$ with $2 \leq i \leq m-1$ are fixed by automorphisms of $\mathbf{B_m^n}$, since $y_i$ is the only point in its corresponding $n$-simplex that does not belong to a simplex, which is not a subsimplex of that $n$-simplex. 
    
    Hence, the only points that may remain unfixed are $x_0,y_1,x_m$ and $y_m$. Again, since each $n$-simplex is mapped to itself, the image of $y_1$ is either $x_0$ or $y_1$ and the image of $y_m$ is either $x_m$ or $y_m$.
\end{proof}

\section{Rigidifying simplicial complexes} \label{section:simp:comp}
We are now ready to define the $G$-rigidification of a finite (connected) simplicial complex $\mathbf{K}$. The idea is the following: we fix a path $P= (x_0, \dots, x_m)$ of $\mathbf{K}$ containing all the points of $\mathbf{K}$ (with possible repetitions) and glue on $P$ a band $\mathbf{B_m^{n+1}}$ from Definition \ref{def:glued:band} by identifying the points $x_0, \dots, x_m$ of $\mathbf{B_m^{n+1}}$ with the points of $P$ with the same label. Then, for each $g \in G$, we glue a copy of the band $\mathbf{B_m^{n+1}}$ on the path $g \cdot P= (g \cdot x_0, \dots, g \cdot x_m)$ by identifying each $x_i \in \mathbf{B_m^{n+1}}$ with $g\cdot x_i \in g \cdot P$.

\begin{definition} \label{def:G:rigidif}
    Let $\mathbf{K}$ be a connected $n$-dimensional simplicial complex with non-trivial automorphism group $\aut(\mathbf{K})$ and let $G \leq \aut(\mathbf{K})$ be a subgroup. We define the \textit{$G$-rigidification of $\mathbf{K}$ with respect to the path $P$} as 
    $$\RG(\mathbf{K};P)= \mathbf{K} \cup \bigcup_{g \in G} \mathbf{B_g}$$
    where $P=(x_0, \dots, x_m)$ is a path containing all points of $\mathbf{K}$ (with possible repetitions) and $\mathbf{B_g}$ is the copy of $\mathbf{B_m^{n+1}}$ for which the points $x_0, \dots, x_m$ are (in order) the points of the path $g \cdot P$. We will denote the points of $\mathbf{B_g} \setminus \mathbf{K}$ with the superscript $g$.
    
Whenever it is clear which path we are considering, we will drop the $P$ from the notation of $\RG(\mathbf{K};P)$ and write it simply as $\RG(\mathbf{K})$. In such a case, we call it the \textit{$G$-rigidification of $\mathbf{K}$}.
\end{definition} 

\begin{lemma} \label{lemma:g:orbits}
    Let $G $ be a subgroup of the symmetric group $S_n=\aut(\{x_1, \dots,x_n\})$. Then the orbit of the $n$-tuple $(x_1, \dots,x_n)$ under the diagonal action of $G$ uniquely determines $G$: if another subgroup $K \leq S_n$ is such that 
    $$K \cdot (x_1, \dots,x_n)=G\cdot (x_1, \dots,x_n),$$ then $G=K$.
\end{lemma}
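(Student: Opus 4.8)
The plan is to show mutual containment $G \subseteq K$ and $K \subseteq G$ by exploiting the fact that the diagonal orbit of $(x_1,\dots,x_n)$ records the full action of each group element on the ground set. The key observation is that a permutation $\sigma \in S_n$ is completely determined by the single tuple $\sigma \cdot (x_1,\dots,x_n) = (x_{\sigma^{-1}(1)},\dots,x_{\sigma^{-1}(n)})$ (or $(x_{\sigma(1)},\dots,x_{\sigma(n)})$ depending on the action convention), since all coordinates $x_1,\dots,x_n$ are distinct and so reading off the tuple recovers where each letter is sent. Thus the map $S_n \to S_n \cdot (x_1,\dots,x_n)$, $\sigma \mapsto \sigma \cdot (x_1,\dots,x_n)$, is a bijection onto the full orbit under $S_n$.

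First I would make this bijectivity precise: because the entries of $(x_1,\dots,x_n)$ are pairwise distinct, the stabilizer of this tuple under the diagonal $S_n$-action is trivial, so distinct permutations produce distinct tuples. Consequently, for any subgroup $H \leq S_n$, the orbit $H \cdot (x_1,\dots,x_n)$ is in canonical bijection with $H$ itself, and moreover each element of $H$ can be read off from the corresponding tuple in the orbit.

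Then I would carry out the containment argument. Suppose $K \cdot (x_1,\dots,x_n) = G \cdot (x_1,\dots,x_n)$. Take any $k \in K$. Its associated tuple $k \cdot (x_1,\dots,x_n)$ lies in $K \cdot (x_1,\dots,x_n) = G \cdot (x_1,\dots,x_n)$, so there exists $g \in G$ with $g \cdot (x_1,\dots,x_n) = k \cdot (x_1,\dots,x_n)$. By the triviality of the stabilizer established above, this forces $k = g \in G$, giving $K \subseteq G$. The reverse inclusion $G \subseteq K$ follows by the symmetric argument, swapping the roles of $G$ and $K$. Hence $G = K$.

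I do not anticipate a genuine obstacle here; the statement is essentially a restatement of the fact that a permutation is its own reduced data when applied to a tuple of distinct symbols. The only point requiring mild care is fixing the action convention (left action $g \cdot (x_1,\dots,x_n) = (x_{g(1)},\dots,x_{g(n)})$ versus its inverse) and verifying that the stabilizer of the distinct-entry tuple is trivial, which is immediate. Everything else is a short two-way inclusion.
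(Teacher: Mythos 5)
Your proof is correct and takes essentially the same route as the paper's: both rest on the observation that a permutation of $\{x_1,\dots,x_n\}$ is determined by its effect on the tuple of distinct symbols, i.e.\ the diagonal action on $(x_1,\dots,x_n)$ is free. The paper phrases this contrapositively (picking $f \in K \setminus G$ and noting $f\cdot(x_1,\dots,x_n)$ cannot equal any $g\cdot(x_1,\dots,x_n)$), while you phrase it as triviality of the stabilizer plus a two-way inclusion, but the content is identical.
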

\begin{proof}
    We show that if $G \neq K$, then the $G$-orbit of $(x_1, \dots,x_n)$ does not coincide with the $K$-orbit of $(x_1, \dots,x_n)$. Let $f \in K \setminus  G$. Then, for each $g \in G$ there is an element $x_{i_g} \in \{x_1 \dots,x_n\}$ such that $f(x_{i,g})\neq g(x_{i,g})$. Hence, $f \cdot(x_1, \dots,x_n)=(f(x_1), \dots, f(x_n)) \notin G \cdot (x_1, \dots, x_n)$ and $K \cdot (x_1, \dots,x_n) \neq G\cdot (x_1, \dots,x_n)$.
\end{proof}
\begin{proposition} \label{prop:of:rigidif}
Let $\mathbf{K}$ be a finite connected $n$-dimensional simplicial complex with non-trivial automorphism group $\aut(\mathbf{K})$. Let $G \leq \aut(\mathbf{K})$ be a non-trivial subgroup and $P$ a path containing all the points of $\mathbf{K}$ (with possible repetitions). Then, $\RG(\mathbf{K};P)$, the \textit{$G$-rigidification of $\mathbf{K}$ with respect to the path $P$}, satisfies the following:
\begin{enumerate}[label={\rm (\roman{*})}]
    \item\label{prop:of:rigidif:1} $\aut(\RG(\mathbf{K};P)) \cong G \leq \aut(\mathbf{K})$; 
    \item\label{prop:of:rigidif:2} $\vert \RG(\mathbf{K};P) \vert  \simeq \vert \mathbf{K} \vert$. 
\end{enumerate}
\end{proposition}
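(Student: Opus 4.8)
The plan is to prove the two assertions separately, the homotopy statement \ref{prop:of:rigidif:2} being comparatively soft and the rigidity statement \ref{prop:of:rigidif:1} carrying the real content. Throughout I write $\tilde{g}$ for the candidate extension of $g \in G$: it acts as $g$ on $\mathbf{K}$ and sends the band $\mathbf{B_h}$ to $\mathbf{B_{gh}}$ by the unique simplicial isomorphism carrying the spine $h \cdot P$ to $gh \cdot P$ in order (so $h x_i \mapsto gh x_i$) and each superscript-$h$ vertex to the corresponding superscript-$gh$ vertex. Since $\tilde g$ agrees with the automorphism $g$ of $\mathbf{K}$ on the overlaps (which lie in $\mathbf{K}$), it is a well-defined automorphism, $g \mapsto \tilde g$ is an injective homomorphism, and hence $G \leq \aut(\RG(\mathbf{K};P))$. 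It remains to prove the reverse inclusion and the homotopy equivalence.

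For \ref{prop:of:rigidif:2} I would avoid the gluing lemma (the spine $g\cdot P$ need not be contractible once $P$ has repeated vertices) and instead collapse band by band. By Lemma \ref{lemma:glued:band}\ref{lemma:glued:band:1} each $\mathbf{B_m^{n+1}}$ is contractible, and since its spine is a subcomplex onto which it visibly collapses, it deformation retracts onto the spine by a homotopy fixing the spine pointwise. This homotopy is compatible with the identifications made in the gluing (which only affect the spine) and with the decomposition $\RG(\mathbf{K};P) = \mathbf{K} \cup \bigcup_{g} \mathbf{B_g}$, because distinct bands meet only inside $\mathbf{K}$ and each $\mathbf{B_g}$ meets $\mathbf{K}$ exactly in $g \cdot P$. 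Performing these retractions one band at a time, each fixing everything outside the interior of the band being collapsed, yields a deformation retraction of $\RG(\mathbf{K};P)$ onto $\mathbf{K}$, so $\vert \RG(\mathbf{K};P)\vert \simeq \vert \mathbf{K}\vert$.

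The heart of the argument is $\aut(\RG(\mathbf{K};P)) \leq G$; fix $\phi$. First I would recognise $\mathbf{K}$ intrinsically: each band-private vertex ($y_i^g$, $w_i^g$, $v_j^g$) lies in exactly one $(n+1)$-simplex and each dangling-path vertex in none, whereas a vertex of $\mathbf{K}$ is a spine vertex of every $g\cdot P$ and so lies in at least $\vert G\vert \geq 2$ of them; thus $V(\mathbf{K})$ is exactly the set of vertices lying in at least two $(n+1)$-simplices, a $\phi$-invariant set by Lemma \ref{points:auto:simplices} -- this is where the non-triviality of $G$ enters. As the only simplices all of whose vertices lie in $V(\mathbf{K})$ are those of $\mathbf{K}$ (the spine contributes only its edges, already in $\mathbf{K}$), $\phi$ restricts to $\bar\phi \in \aut(\mathbf{K})$. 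Next, extending Lemma \ref{lemma:glued:band}\ref{lemma:glued:band:2}, among the band-private vertices $w_i^g$ is singled out by lying in a $2$-simplex that is not a face of any $(n+1)$-simplex (the gluing triangle $\{x_i,w_i,x_{i+1}\}$) together with an attached path of length $i$ whose further vertices lie in no $2$-simplex; the distinct lengths give $\phi(w_i^g)=w_i^{\sigma(g)}$, and the edges $\{w_i^g,w_{i+1}^g\}$, which exist only inside a single band, force $\sigma$ to be independent of $i$. Since $w_i^g$ determines the $(n+1)$-simplex containing it, $\phi$ carries $\mathbf{B_g}$ onto $\mathbf{B_{\sigma(g)}}$ preserving the spine order, whence $\bar\phi(g x_i)=\sigma(g) x_i$ for all $g,i$; taking $g=e$ and using that $P$ exhausts $V(\mathbf{K})$ gives $\bar\phi=\sigma(e)=:g_0\in G$. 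Finally $\psi:=\tilde{g_0}^{-1}\phi$ fixes $\mathbf{K}$ pointwise and preserves every band and every $w_i^g$, hence fixes each $v_j^g$ (distinct path lengths) and each remaining $y_i^g$, so $\psi=\id$ and $\phi=\tilde{g_0}\in G$.

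The main obstacle is precisely this last passage: transporting the single-band rigidity of Lemma \ref{lemma:glued:band} into the ambient complex, where all bands share the entire vertex set $V(\mathbf{K})$ and $\mathbf{K}$ contributes its own higher simplices. Two features make it go through: the count of incident $(n+1)$-simplices, meaningful only because $\vert G\vert\geq 2$, which isolates $V(\mathbf{K})$, and the strictly increasing lengths of the paths attached at the $w_i$, which fix the direction of each band and pin down $\sigma$. Conceptually this realises Lemma \ref{lemma:g:orbits}: the bands are attached precisely along the $G$-orbit $G\cdot P$ of the tuple $P$, $\bar\phi$ must permute that orbit, and since $P$ meets every vertex the orbit determines $G$, forcing $\bar\phi\in G$.
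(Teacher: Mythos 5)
Your proof is correct and follows essentially the same route as the paper's: both isolate $V(\mathbf{K})$ by counting incident $(n+1)$-simplices (which is exactly where non-triviality of $G$ enters), show that an automorphism restricts to an automorphism of $\mathbf{K}$ and permutes the bands accordingly, and conclude via the orbit of the spine tuple (the mechanism of Lemma \ref{lemma:g:orbits}) that this restriction lies in $G$. Your band-by-band deformation retraction rel the spine for part \ref{prop:of:rigidif:2}, and your explicit tracking of the $w_i^g$ followed by composing with $\tilde{g_0}^{-1}$ for part \ref{prop:of:rigidif:1}, are slightly more careful executions of the same steps the paper performs via a single pushout collapse and the image of the path $Q$.
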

\begin{proof}
In this proof, we fix a path $P=(x_0, \dots, x_m)$ (containing all the points of $\mathbf{K}$) and write $\RG(\mathbf{K})$ to ease the notation. 

In order to prove \ref{prop:of:rigidif:2}, note that the simplicial complex $\RG(\mathbf{K})$ is given by the union $\RG(\mathbf{K})= \mathbf{K} \cup \bigcup_{g \in G} \mathbf{B_g}$ and, for each $g \in G$, the path $g \cdot P$ is in both $\mathbf{K}$ and $\mathbf{B_g}$. Hence, the geometric realization of $\RG(\mathbf{K})$ can be described as the following pushout
     \begin{equation}
            \begin{tikzcd}
                 \vert \bigcup_{g \in G} g \cdot P \vert \arrow[r] \arrow[d] & \vert \bigcup_{g \in G} \mathbf{B_g} \vert  \arrow[d] \\
                \vert \mathbf{K} \vert \arrow[r] & \vert \RG(\mathbf{K}) \vert  \arrow[ul, phantom, "\text{\Huge$\ulcorner$}", pos=0]
            \end{tikzcd} \nonumber
        \end{equation}
where the map $\vert \bigcup_{g \in G} g \cdot P \vert \rightarrow \vert \mathbf{K} \vert$ is induced by the inclusion of each $g \cdot P$ and the map $\vert \mathbf{K} \vert \rightarrow \vert \RG(\mathbf{K}) \vert$ is the inclusion. 

Since $\vert \bigcup_{g \in G} \mathbf{B_g} \vert$ is contractible, collapsing it onto $\vert \bigcup_{g \in G} g \cdot P \vert$ in $\vert \RG(\mathbf{K}) \vert$, does not change the homotopy type \cite[Proposition 0.17]{hatcher} and yields $\vert \mathbf{K} \vert$ in the process. Thus, $\vert \mathbf{K} \vert$ and $\vert \RG(\mathbf{K}) \vert $ are homotopy equivalent. 

Let us now show \ref{prop:of:rigidif:1}. We will start by showing that an automorphism of $\RG(\mathbf{K})$ is determined by its restriction to $\mathbf{K}$ and that such restriction is an automorphism of $\mathbf{K}$. Then, we will see that the $G$-orbit and the $\RG(\mathbf{K})$-orbit of a tuple containing all the points of $\mathbf{K}$ coincide. The result will then follow by Lemma~\ref{lemma:g:orbits}.

Note that the points of $\mathbf{K}$ are in at least $\vert G \vert$ $(n+1)$-simplices. Since all points of $\RG(\mathbf{K}) \setminus \mathbf{K}$ are in at most one $(n+1)$-simplex, we conclude, using Lemma \ref{points:auto:simplices}, that $f(\mathbf{K})=\mathbf{K}$. Using Lemma \ref{lemma:glued:band}, we know that each piece $\mathbf{B_g}$ does not have automorphisms (when the points of $\mathbf{K}$ are mapped to $\mathbf{K}$). Therefore, by connectedness, the effect of the automorphisms of $\RG(\mathbf{K})$ outside $\mathbf{K}$ is just switching $\mathbf{B_g}$ pieces with each other. Such a switching is done according to what $f$ does on $\mathbf{K}$. 

In order to finish the proof, we show that the $G$-orbit and the $\aut(\RG(\mathbf{K}))$-orbit of a tuple containing all the vertices of $\mathbf{K}$ coincide. Consider the path 
$$Q=(x_0, \dots, x_j, \dots , x_m, w_{m}^e,u_{1,m}^e,\dots ,u_{m,m}^e)$$for $e \in G$ the identity. The image of this path must be of the same form, i.e., $$f(Q)=(x_{\sigma(0)},\dots x_{\sigma(j)},\dots, x_{\sigma(m)}, w_{m}^g,u_{1,m}^g,\dots ,u_{m,m}^g)$$ for some $g \in G$ and some permutation $\sigma$ of $\{0, \dots, m\}$. Furthermore, each $x_{\sigma(j)}$ must be the $j$-th element of one of the paths $g \cdot P$ for some $g \in G$ which means $x_{\sigma(j)}$ is in the $G$-orbit of $x_j$, hence the path $f(Q)$ is the following
$$f(Q)=(g \cdot x_0,\dots, g \cdot x_j,\dots, g \cdot x_m, w_{m}^g,u_{1,m}^g,\dots ,u_{m,m}^g) .$$
Consequently, the $G$-orbit of $(x_1, \dots,x_n)$ coincides with the $\aut(\RG(\mathbf{K}))$-orbit of $(x_1, \dots,x_n)$. Therefore, by Lemma~\ref{lemma:g:orbits}~, $\aut(\RG(\mathbf{K})) \cong G$.
\end{proof}

\begin{theorem} \label{simplicial:complex:thm}
Let $\mathbf{K}$ be a finite connected (abstract) simplicial complex and let $G$ be a subgroup of the full automorphism group $\aut(\mathbf{K})$ of $\mathbf{K}$. Then, there is a simplicial complex $\mathbf{L}$ such that:
\begin{enumerate}[label={\rm (\roman{*})}]
    \item $\aut(\mathbf{L}) \cong G$;
    \item $\vert \mathbf{L} \vert$ is homotopy equivalent to $\vert \mathbf{K} \vert$.
\end{enumerate}
\end{theorem}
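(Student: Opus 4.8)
The plan is to reduce the theorem to Proposition \ref{prop:of:rigidif} whenever possible and to treat the trivial group as a separate case. Since $\mathbf{K}$ is finite and connected, its $1$-skeleton is a connected graph, and traversing a spanning tree yields a path $P=(x_0,\dots,x_m)$ that visits every vertex of $\mathbf{K}$ (with possible repetitions); this is the datum that the rigidification construction requires. If $\mathbf{K}$ is $0$-dimensional it is a single point and there is nothing to prove, so I may assume $\dim\mathbf{K}\geq 1$, which is exactly the range in which the bands $\mathbf{B_m^{n+1}}$ are defined.

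Suppose first that $G\neq\{1\}$, which forces $\aut(\mathbf{K})\neq\{1\}$. Then I set $\mathbf{L}=\RG(\mathbf{K};P)$ and invoke Proposition \ref{prop:of:rigidif} verbatim: part \ref{prop:of:rigidif:1} gives $\aut(\mathbf{L})\cong G$ and part \ref{prop:of:rigidif:2} gives $\vert\mathbf{L}\vert\simeq\vert\mathbf{K}\vert$, which are precisely the two required conclusions. Thus the entire non-trivial case is immediate from the machinery already set up.

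It remains to handle $G=\{1\}$, where the $G$-rigidification cannot be used because a single band can still be reflected. If $\aut(\mathbf{K})=\{1\}$ already, I simply take $\mathbf{L}=\mathbf{K}$. Otherwise I rigidify by hand using the rigid contractible complexes $\mathbf{W_k}$: enumerating the vertices $v_1,\dots,v_N$ of $\mathbf{K}$ and fixing pairwise distinct integers $k_1<\cdots<k_N$, I glue a copy of $\mathbf{W_{k_i}}$ to each $v_i$ along a distinguished vertex, and let $\mathbf{L}$ be the result. Because each $\mathbf{W_{k_i}}$ is contractible and is attached at a single point, $\vert\mathbf{L}\vert$ is a wedge of $\vert\mathbf{K}\vert$ with contractible pieces, hence homotopy equivalent to $\vert\mathbf{K}\vert$. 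For rigidity I argue as in the proof of Lemma \ref{lemma:glued:band}: an automorphism of $\mathbf{L}$ must send the gadget glued at $v_i$ to a gadget of the same combinatorial type, and Lemmas \ref{points:auto:simplices} and \ref{paths:auto:simplices} let one read off the length $k_i$ from the attached paths; as the $k_i$ are pairwise distinct, every $v_i$ is fixed, and the rigidity of each $\mathbf{W_{k_i}}$ then forces the automorphism to be the identity, so $\aut(\mathbf{L})=\{1\}=G$.

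The main obstacle I expect is precisely this last case. After identifying the gluing vertex of $\mathbf{W_{k_i}}$ with a vertex $v_i$ of $\mathbf{K}$ that may already lie in many simplices, one must verify that the attached gadget remains uniquely recognizable, i.e.\ that no automorphism can exchange gadget vertices with vertices of $\mathbf{K}$ or interchange two gadgets. The delicate point is the choice of the gluing vertex together with the simplex- and path-counting invariants of Lemmas \ref{points:auto:simplices} and \ref{paths:auto:simplices}; the distinctness of the lengths $k_i$ is what ultimately destroys every symmetry of $\mathbf{K}$, since any nontrivial automorphism of $\mathbf{K}$ would be forced to match two gadgets of different lengths.
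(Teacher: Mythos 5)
Your proposal follows essentially the same route as the paper: the non-trivial case is exactly Proposition \ref{prop:of:rigidif} applied to $\RG(\mathbf{K};P)$, and the trivial case glues the rigid contractible gadgets $\mathbf{W_{k_i}}$ with pairwise distinct lengths at the vertices and recovers rigidity via the path-counting Lemmas \ref{points:auto:simplices} and \ref{paths:auto:simplices}. The one point to tighten is that the $k_i$ should also be chosen larger than the number of vertices of $\mathbf{K}$ (the paper takes $k_i=n+i$), so that the tail paths, having all points distinct, cannot be confused with paths lying inside $\mathbf{K}$ itself.
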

\begin{proof}
For the case $G=\{1\}$, consider the set of vertices of $\mathbf{K}$, $V(\mathbf{K})=\{x_1, \dots, x_n\}$, and glue $\mathbf{W_{n+i}}$ in each point $x_i$ by identifying $v_2$ and $x_i$, for $1 \leq i \leq n$. If we relabel $v_2$ in $\mathbf{W_{n+i}}$ to $x_i$, then we can take $\mathbf{L}$ to be the union $\mathbf{L}= \mathbf{K} \cup \bigcup_{1\leq i \leq n} \mathbf{W_{n+i}}$. Then, the geometric realization of $\mathbf{L}$ consists of the following wedge sum
$$\vert \mathbf{L} \vert = \vert \mathbf{K} \vert \bigvee_{ \substack{x_i \sim v_2 \\ i=1}}^n \vert \mathbf{W_{n+i}} \vert$$
and since $\vert\mathbf{W_{n+i}} \vert$ is contractible for all $1 \leq i \leq n$, $\vert \mathbf{L} \vert$ is homotopy equivalent to $\vert \mathbf{K} \vert$. In order to check that the simplicial complex $\mathbf{L}$ is rigid, observe that $P=(t_1, \dots, t_{n+i+1})$ is the only path of $\mathbf{L}$ (with all points distinct) satisfying the following properties simultaneously:
\begin{itemize}
    \item $P$ has length $n+i+1$;
    \item $P$ starts at a point with exactly $2$ edges, one of which is incident to a point in a $2$-simplex;
    \item $P$ ends in a point with a single incident edge.
\end{itemize}
Therefore, by Lemma \ref{paths:auto:simplices} the image of $P$ under an automorphism of $\mathbf{L}$ must be $P$. This forces $x_n$ to be fixed by the automorphisms of $\mathbf{L}$. Successively applying this reasoning to the longest paths of $\mathbf{W_{n+i}} \subset \mathbf{L}$ for all $1 \leq i \leq n-1$ ensures that each $x_i$ for $1 \leq i \leq n$ is fixed by the automorphisms of $\mathbf{L}$. Since each $\mathbf{W_{n+i}}$ is rigid, we get that $\mathbf{L}$ has no non-trivial automorphisms.

Finally, for the case in which $G$ is not trivial, we take $\mathbf{L}$ to be the $G$-rigidification of $\mathbf{K}$ with respect to some path $P$ containing all the points of $\mathbf{K}$ (with possible repetitions). By Proposition \ref{prop:of:rigidif}, $\mathbf{L}=\RG(\mathbf{K};P)$ has $G$ as full automorphism group and the geometric realization $\vert \RG(\mathbf{K};P) \vert $ is homotopy equivalent to $\vert \mathbf{K} \vert$.
\end{proof}

Applying Theorem \ref{simplicial:complex:thm} to a simplicial complex $\mathbf{K}$ from Theorem \ref{simp:comp:covering}, we are able to realize a given action of a finite group on a finitely presentable group as the action of the full automorphism group of a simplicial complex on the fundamental group of the corresponding geometric realization. 

\begin{corollary} \label{simplicial:complex:cor}
    Let $G$ be a finite group acting on a finitely presentable group $M$. Then, there is a finite (abstract) simplicial complex $\mathbf{L}$ such that:
    \begin{enumerate}[label={\rm (\roman{*})}]
        \item\label{simplicial:complex:cor:1} $\pi_1(\vert \mathbf{L} \vert)\cong M$;
        \item\label{simplicial:complex:cor:2} $G \cong \aut(\mathbf{L})$;
        \item\label{simplicial:complex:cor:3} $G \curvearrowright M$ is equivalent to $ \aut(\mathbf{L}) \curvearrowright \pi_1(\vert \mathbf{L} \vert)$.
    \end{enumerate}
\end{corollary}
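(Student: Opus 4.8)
The plan is to derive the corollary by feeding the complex produced in Theorem~\ref{simp:comp:covering} into Theorem~\ref{simplicial:complex:thm}. First I would apply Theorem~\ref{simp:comp:covering} to the action $G \curvearrowright M$ to obtain a finite (abstract) simplicial complex $\mathbf{K}$ with $\pi_1(\vert\mathbf{K}\vert)\cong M$, with $G\leq\aut(\mathbf{K})$ acting simplicially, and with $G\curvearrowright M$ equivalent to $G\curvearrowright\pi_1(\vert\mathbf{K}\vert)$. Since $\pi_1(\vert\mathbf{K}\vert)$ is defined, $\vert\mathbf{K}\vert$ is connected, and $\mathbf{K}$ is finite, so Theorem~\ref{simplicial:complex:thm} applies to the pair $(\mathbf{K},G)$ and yields a simplicial complex $\mathbf{L}$ with $\aut(\mathbf{L})\cong G$ and $\vert\mathbf{L}\vert\simeq\vert\mathbf{K}\vert$. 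This is exactly item~(ii), and the homotopy equivalence combined with $\pi_1(\vert\mathbf{K}\vert)\cong M$ gives $\pi_1(\vert\mathbf{L}\vert)\cong M$, which is item~(i).

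The genuine content beyond the two theorems is item~(iii), and that is where I expect the work to lie. The key is that the isomorphism $\aut(\mathbf{L})\cong G$ supplied by Theorem~\ref{simplicial:complex:thm} is not abstract. Taking $\mathbf{L}=\RG(\mathbf{K};P)$ for a path $P$ containing all vertices of $\mathbf{K}$, the proof of Proposition~\ref{prop:of:rigidif} shows that every automorphism $f$ of $\mathbf{L}$ satisfies $f(\mathbf{K})=\mathbf{K}$, is determined by its restriction $f|_{\mathbf{K}}$, and that restriction lands in $G\leq\aut(\mathbf{K})$; moreover the restriction map $\aut(\mathbf{L})\to\aut(\mathbf{K})$ is an isomorphism onto $G$. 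I would therefore track this identification explicitly: under $\aut(\mathbf{L})\cong G$, the element $f$ acts on $\mathbf{K}$ precisely as the corresponding element of $G$, while outside $\mathbf{K}$ it merely permutes the bands $\mathbf{B_g}$ according to $f|_{\mathbf{K}}$.

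It then remains to check that the homotopy equivalence $\vert\mathbf{L}\vert\simeq\vert\mathbf{K}\vert$ intertwines the two actions on fundamental groups. For this I would use that the equivalence is realized, as in the proof of Proposition~\ref{prop:of:rigidif}\ref{prop:of:rigidif:2}, by collapsing the contractible union $\vert\bigcup_{g\in G}\mathbf{B_g}\vert$ onto $\vert\bigcup_{g\in G}g\cdot P\vert\subseteq\vert\mathbf{K}\vert$. Because $f$ fixes $\mathbf{K}$ setwise (acting there by the given simplicial $G$-action) and only permutes the bands compatibly, this collapse is equivariant with respect to $\aut(\mathbf{L})\cong G$; hence the automorphism induced by $f$ on $\pi_1(\vert\mathbf{L}\vert)$ corresponds, under $\pi_1(\vert\mathbf{L}\vert)\cong\pi_1(\vert\mathbf{K}\vert)$, to the automorphism induced by $f|_{\mathbf{K}}$. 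Thus $\aut(\mathbf{L})\curvearrowright\pi_1(\vert\mathbf{L}\vert)$ is equivalent to $G\curvearrowright\pi_1(\vert\mathbf{K}\vert)$, which by property~\ref{simp:comp:covering:3} of Theorem~\ref{simp:comp:covering} is equivalent to $G\curvearrowright M$, giving~(iii). The main obstacle is precisely this equivariance bookkeeping: one must verify that the abstract isomorphism $\aut(\mathbf{L})\cong G$ agrees with the geometric identification coming from the collapse (a mismatch could at worst introduce an automorphism of $G$, which the notion of equivalent actions still absorbs, but it should be pinned down). The degenerate case $G=\{1\}$, where $\mathbf{L}$ is built by gluing the rigid complexes $\mathbf{W_{n+i}}$, I would dispatch separately, noting that~(iii) is vacuous since both actions are then trivial.
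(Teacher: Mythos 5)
Your proposal is correct and follows essentially the same route as the paper: items (i) and (ii) come directly from Theorem~\ref{simplicial:complex:thm}, and item (iii) from the fact that automorphisms of $\RG(\mathbf{K};P)$ are determined by their restrictions to $\mathbf{K}$, where they recover the original $G$-action. You are in fact somewhat more explicit than the paper about checking that the collapse of the bands is equivariant, and about the degenerate case $G=\{1\}$, but these are refinements of the same argument rather than a different approach.
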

\begin{proof}
     Statements \ref{simplicial:complex:cor:1} and \ref{simplicial:complex:cor:2} are direct consequences of Theorem \ref{simplicial:complex:thm}. In order to obtain Statement \ref{simplicial:complex:cor:3}, recall that for a simplicial complex $\mathbf{K}$ from Theorem \ref{simp:comp:covering}, the action $G \curvearrowright M$ is equivalent to the action $G \curvearrowright \pi_1(\vert \mathbf{K} \vert)$ and that $G$ acted on $\mathbf{K}$ by automorphisms. Since the automorphisms of the $G$-rigidification of a (connected) simplicial complex $\mathbf{K}$, $\mathbf{L}=\RG(\mathbf{K})$, are determined by their restriction to $\mathbf{K}$, $\mathbf{L}$ inherits the same action. Moreover, now the group $G \leq \aut(\mathbf{K})$ is isomorphic to the full automorphism group of $\mathbf{L}=\RG(\mathbf{K})$. Hence, we obtain that the action $G \curvearrowright M$ is equivalent to the action $\aut(\mathbf{L}) \curvearrowright \pi_1(\vert \mathbf{L} \vert)$.
\end{proof}

\section{Approximation by a minimal finite space} \label{section:finite:spaces}
In this section, we want to translate the realizability result obtained in the previous section (Corollary \ref{simplicial:complex:cor}) from the category of abstract simplicial complexes $\asc$ to the category of topological spaces $\top$. There is a functor that is very useful for such a purpose. Recall that the category of finite $T_0$-spaces (with continuous maps) $\finite$ is isomorphic to the category of finite posets (with order-preserving maps) and consider the functor $$\mathcal{X} \colon \asc \longrightarrow \finite \subset \top$$ that takes a simplicial complex $\mathbf{K}$ to the poset $\mathcal{X}(\mathbf{K})$ of simplices of $\mathbf{K}$ (ordered by inclusion) and a simplicial map $f:\mathbf{K} \rightarrow \mathbf{L} $ to the order-preserving map $\mathcal{X}(f): \mathcal{X}(\mathbf{K}) \rightarrow \mathcal{X}(\mathbf{L}) $ defined by $\mathcal{X}(f)(\sigma)=f(\sigma)$, where $\sigma $ is a simplex of $\mathbf{K}$. Henceforth, we assume all finite spaces are $T_0$ (unless stated otherwise) and make no distinction between a finite space and its corresponding poset. 

Since automorphisms of simplicial complexes preserve inclusion of simplices and poset automorphisms preserve the order relation, we get that $\aut(\mathcal{X}(\mathbf{K})) \cong \aut(\mathbf{K})$. On the other hand, the finite space $\mathcal{X}(\mathbf{K})$ is weakly equivalent to $\vert \mathbf{K} \vert$ (see \cite[Theorem 1.4.12]{barmak}). Hence, we can translate Corollary \ref{simplicial:complex:cor} directly to finite spaces and obtain the following.

\begin{theorem} \label{thm:aut:finite:spaces}
Let $G$ be a finite group acting on a finitely presentable group $M$. Then there is a topological space $X$ such that:
\begin{enumerate}[label={\rm (\roman{*})}]
    \item\label{thm:aut:finite:spaces:1} $\pi_1(X)\cong M$;
    \item\label{thm:aut:finite:spaces:2} $G \cong \aut(X)$;
    \item\label{thm:aut:finite:spaces:3} $G \curvearrowright M$ is equivalent to $ \aut(X) \curvearrowright \pi_1(X)$.
\end{enumerate}
\end{theorem}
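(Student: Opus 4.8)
The plan is to set $X = \mathcal{X}(\mathbf{L})$, where $\mathbf{L}$ is the finite simplicial complex provided by Corollary \ref{simplicial:complex:cor} for the given action $G \curvearrowright M$, and then transport the three properties of $\mathbf{L}$ across the functor $\mathcal{X}\colon \asc \to \finite \subset \top$. In this way the theorem reduces to checking that $\mathcal{X}$ respects fundamental groups, automorphism groups, and the canonical actions.

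First I would record properties \ref{thm:aut:finite:spaces:1} and \ref{thm:aut:finite:spaces:2}, which are essentially immediate. Since $\mathcal{X}(\mathbf{L})$ is weakly equivalent to $\vert \mathbf{L} \vert$ by \cite[Theorem 1.4.12]{barmak}, we obtain $\pi_1(X) \cong \pi_1(\vert \mathbf{L} \vert) \cong M$, which gives \ref{thm:aut:finite:spaces:1}. For \ref{thm:aut:finite:spaces:2}, functoriality of $\mathcal{X}$ together with the identification $\aut(\mathcal{X}(\mathbf{L})) \cong \aut(\mathbf{L})$ recalled above (automorphisms of simplicial complexes correspond to order automorphisms of the face poset) yields $\aut(X) \cong \aut(\mathbf{L}) \cong G$.

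The substantive point is \ref{thm:aut:finite:spaces:3}, where I must verify that the two isomorphisms just used are compatible with the group actions. The canonical action $\aut(\mathbf{L}) \curvearrowright \pi_1(\vert \mathbf{L} \vert)$ sends an automorphism $f$ to $\vert f \vert_\ast$, while $\aut(X) \curvearrowright \pi_1(X)$ sends $\mathcal{X}(f)$ to $\mathcal{X}(f)_\ast$. The key observation is that the weak equivalence $\mathcal{X}(\mathbf{L}) \to \vert \mathbf{L} \vert$ of \cite{barmak} is natural in $\mathbf{L}$, so for every $f \in \aut(\mathbf{L})$ the naturality square relating $\mathcal{X}(f)$ and $\vert f \vert$ commutes. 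Applying $\pi_1$ shows that the isomorphism $\pi_1(X) \cong \pi_1(\vert \mathbf{L} \vert)$ intertwines $\mathcal{X}(f)_\ast$ with $\vert f \vert_\ast$; hence the action $\aut(X) \curvearrowright \pi_1(X)$ is equivalent to $\aut(\mathbf{L}) \curvearrowright \pi_1(\vert \mathbf{L} \vert)$, which Corollary \ref{simplicial:complex:cor} already identifies with $G \curvearrowright M$.

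I expect the main obstacle to be precisely this equivariance: a bare weak equivalence between two spaces would not, on its own, guarantee that the induced automorphisms of $\pi_1$ correspond. What makes the argument go through is the naturality of McCord's weak equivalence, which forces it to respect the automorphisms of $\mathbf{L}$ and thus to intertwine the two $\pi_1$-actions. Once naturality is invoked, the three properties chain together and the theorem follows by taking $X = \mathcal{X}(\mathbf{L})$.
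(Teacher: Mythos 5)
Your proposal is correct and follows essentially the same route as the paper: take $X=\mathcal{X}(\mathbf{L})$ for $\mathbf{L}$ from Corollary~\ref{simplicial:complex:cor}, get (i) and (ii) from \cite[Theorem 1.4.12]{barmak} and $\aut(\mathcal{X}(\mathbf{L}))\cong\aut(\mathbf{L})$, and reduce (iii) to the equivariance of McCord's weak equivalence, which the paper verifies explicitly by writing it as $\mu\circ s^{-1}$ and checking that $\mu$ and $s$ are $G$-equivariant because the action is simplicial and linear on simplices. The only nitpick is that the weak equivalence runs from $\vert\mathbf{L}\vert$ (via $\vert\sd(\mathbf{L})\vert$) to $\mathcal{X}(\mathbf{L})$, not the other way around, but this does not affect the argument.
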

\begin{proof}
    All that is left to check is statement \ref{thm:aut:finite:spaces:3}. Fix $\mathbf{K}$ an abstract simplicial complex from Corollary~\ref{simplicial:complex:cor}. Let $\sd(\mathbf{K})$ denote the first barycentric subdivision of $\mathbf{K}$. Note that each vertex of $\sd(\mathbf{K})$ corresponds to a simplex of $\mathbf{K}$ and, hence, to a point of $\chi(\mathbf{K})$. Denote by $s: \vert\sd(\mathbf{K})\vert \rightarrow \vert \mathbf{K}\vert  $ the linear homeomorphism taking a simplex $\sigma$ of $\mathbf{K}$ to its barycenter. Each point $y$ of $\vert\sd(\mathbf{K)\vert}$ is a convex combination of the vertices $\{v_1 \dots,v_l\}$ of $\sd(\mathbf{K}) $. If we suppress the vertices $v_i$ for which $t_i=0$, we can write $y= \sum_{i=1}^rt_iv_i$, where $t_i >0$ for all $1 \leq i \leq r$. Now we define $\mu: \vert \sd(\mathbf{K}) \vert \rightarrow \chi(\mathbf{K})$ by assigning to $y$ the minimum of $\{v_1, \dots,v_r\}$ with respect to the order of $\chi(\mathbf{K})$.
    
    The composition $\mu \circ s^{-1}: \vert \mathbf{K} \vert \rightarrow \chi(\mathbf{K)}$ is a weak homotopy equivalence, as shown in \cite[Theorem 1.4.12]{barmak}. 

    The $G$-action on $\mathbf{K}$ is simplicial and for each $g \in G$ an $n$-simplex $\sigma=\{v_1, \dots,v_n\}$ is sent to the $n$-simplex $g\cdot \sigma=\{g\cdot v_1, \dots,g\cdot v_n\}$ by the linear homeomorphism taking $v_i$ to $g \cdot v_i$ for all $1 \leq i \leq n$. Therefore, the maps $\mu$ and $s$ are $G$-equivariant. This implies that $\mu \circ s^{-1}$ is $G$-equivariant, since inverting and composing equivariant maps yields equivariant maps. Consequently, if we denote by $\varphi$ the map taking an automorphism of $\mathbf{K}$ to the induced map on simplices and by $(\mu \circ s^{-1})_\bullet$ the map induced by $\mu \circ s^{-1}$ on fundamental groups, the following diagram commutes
    $$ \begin{tikzcd}
    \aut(\mathbf{K}) \times \pi_1(|\mathbf{K}|) \arrow[r,"\text{action}"] \arrow[d,"\varphi \times(\mu \circ s^{-1})_{\bullet}"] 
    & \pi_1(|\mathbf{K}|) \arrow[d,"(\mu \circ s^{-1})_{\bullet}"] \\
        \aut(\chi(\mathbf{K})) \times \pi_1(\chi(\mathbf{K})) \arrow[r,"\text{action}"] 
    & \pi_1(\chi(\mathbf{K})).
        \end{tikzcd}$$
    Since the action of $\aut(\mathbf{K})$ on $\pi_1(\vert \mathbf{K}\vert)$ was precisely the action of $G$ on $M$, we conclude the result.
\end{proof}
This is not exactly the statement of Theorem \ref{main:theorem}, since in Theorem \ref{thm:aut:finite:spaces}, we realize $G$ as the automorphism group of $X$ (that is, the group of homeomorphisms of $X$) and not as the group of self-homotopy equivalences $\E(X)$. In order to fix this, we need $X$ to be a \textit{minimal finite space}, since minimal finite spaces have the property that their automorphism group $\aut(X)$ is isomorphic to the group of self-homotopy equivalences $\E(X)$ \cite[Corollary 1.3.7]{barmak}. Unfortunately, applying the functor $\mathcal{X}$ to the $G$-rigidification $\RG(\mathbf{K})$ of a finite connected simplicial complex $\mathbf{K}$ does not generally yield a minimal finite space. Hence, in the remainder of this chapter we will show how to change the finite space $\mathcal{X}(\RG(\mathbf{K}))$ in a way that makes it minimal, but does not change the homotopy type nor the full automorphism group.

For the convenience of the reader, we first recall some concepts related to finite spaces. A finite space $X$ is usually represented by its \textit{Hasse diagram}, a directed graph with vertices corresponding to the points of $X$ and an edge between $x$ and $y$ if $x \lneq y$ and there is no $z \in X$ such that $x \lneq z \lneq y$. In that case, we say $y$ \textit{covers} $x$ and $x$ is \textit{covered} by $y$. A finite space is said to be \textit{minimal} if it has no \textit{beat points}, which are points covering, or covered by, a single point. A sequence of points $x_1 \lneq x_2 \lneq \dots \lneq x_n$ such that $x_i$ is covered by $x_{i+1}$ for all $1 \leq i \leq n-1$ is called a \textit{chain} of length $n$. We refer to \cite{barmak} for more details on finite spaces and record below specific finite spaces and two lemmas that are going to be key in the proof of Theorem \ref{main:theorem}.

\begin{example} \label{example:W2}
Let $W_2$ be the finite space whose Hasse diagram is in Figure \ref{DiagW2}, where the `rightmost' minimal point is labeled $a$ for later use. 
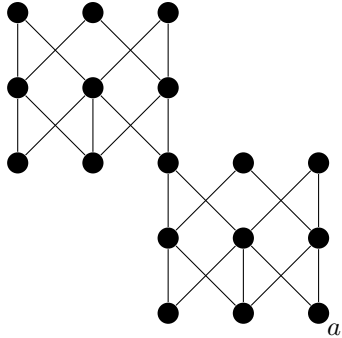
\begin{figure}[h]
 \begin{tikzpicture}
  [scale=1,auto=center,every node/.style={circle,fill=black,minimum size=8pt, inner sep=0pt}]

  \node (m1) at (2,0)  {};
  \node (m2) at (2,1)  {};
  \node (m3) at (2,2)  {};
  \node (m4) at (1,0)  {};
  \node (m5) at (1,1)  {};
  \node (m6) at (1,2)  {};
  \node (m7) at (0,0)  {};
  \node (m8) at (0,1)  {};
  \node (m9) at (0,2)  {};
  \node (m10) at (2,-1) {};
  \node (m11) at (2,-2) {};
  \node (m12) at (3,0) {};
  \node (m13) at (3,-1) {};
  \node (m14) at (3,-2) {};
  \node (m15) at (4,0) {};
  \node (m16) at (4,-1) {};
  \node (m17) at (4,-2) [label=below right:$a$] {};

  \foreach \from/\to in {m1/m2,m1/m5,m2/m3,m2/m6,m4/m5,m4/m2,m4/m8,m5/m7,m3/m5,m5/m9,m7/m8,m8/m9,m8/m6,m1/m10,m1/m13,m10/m11,m10/m14,m11/m13,m13/m14,m14/m16,m15/m16,m15/m13,m16/m12,m16/m17,m17/m13,m12/m10}
    \draw (\from) -- (\to);
\end{tikzpicture} 
\caption{Hasse diagram of $W_2=L_1 \vee L_1$}
\label{DiagW2}
\end{figure}

The space $W_2$ is interesting to us, because it is weakly contractible and rigid, that is, $\aut(W_2)=\{1\}$. Furthermore, the maximal chains of $W_2$ have length $5$. Observe that $W_2$ can be seen as the wedge sum of two copies of the same space, which we denote $L_1$, the space with $9$ points which is under the gluing point (and including it). 

We also define recursively the spaces $W_k$, for $k \geq 3$, by $W_k \vee L_1$. The space $W_k$ has similar properties to $W_2$: it is rigid and weakly contractible. However, the height of $W_k$ is $2k$, meaning we have weakly contractible rigid spaces with height as large as we want. See \cite[Example 6.1]{chocano1} and \cite[Section 3]{cgv:perm} for more details on $W_2$ and $W_k$, respectively.
\end{example}

The following simple lemma also appeared in \cite{cgv:perm}. We emphasize some different details that will appear in the proof of Theorem \ref{main:theorem}.
\begin{lemma} Let $X$ be a finite space and $f$ a homeomorphism of $X$. Then \label{facts_automorphism_finite_spaces}
    \begin{enumerate}[label={\rm (\roman{*})}]
    \item\label{fact1_automorphism_finite_spaces} $x_1 \leq x_2$ if and only if $f(x_1) \leq f(x_2)$, for all $x_1,x_2 \in X$. In particular, $f$ preserves beat points.
    \item\label{fact2_automorphism_finite_spaces} If $x_0 \lneq x_1 \lneq \dots \lneq x_n$ is a chain in $X$, then $f(x_0) \lneq f(x_1) \lneq \dots \lneq f(x_n) $ is also a chain of $X$.
    \end{enumerate} 
In particular, minimal and maximal points are preserved by automorphisms and the position of each point in a chain is also preserved by automorphisms.
\end{lemma}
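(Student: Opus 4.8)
The plan is to reduce the whole statement to the order-theoretic description of finite $T_0$-spaces recalled at the start of Section~\ref{section:finite:spaces}. Under the isomorphism between $\finite$ and the category of finite posets, a continuous map of finite $T_0$-spaces is precisely an order-preserving map for the associated (specialization) order $\leq$. Consequently a homeomorphism $f$ of $X$ is a bijection for which both $f$ and $f^{-1}$ are order-preserving; equivalently, $f$ is an automorphism of the poset $X$. Everything else will follow formally from this one observation, so the heart of the argument is simply to invoke the categorical isomorphism correctly and then to be careful about the distinction between preserving $\leq$ and preserving the covering relation.

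For part \ref{fact1_automorphism_finite_spaces} I would argue directly. If $x_1 \leq x_2$, then $f(x_1) \leq f(x_2)$ because $f$ is order-preserving; conversely, if $f(x_1) \leq f(x_2)$, applying the order-preserving map $f^{-1}$ yields $x_1 \leq x_2$. This gives the biconditional. Preservation of beat points then follows once we note that $f$ preserves the covering relation: if $y$ covers $x$, the biconditional together with injectivity gives $f(x) \lneq f(y)$, and no $z'$ can satisfy $f(x) \lneq z' \lneq f(y)$, since otherwise $f^{-1}(z')$ would lie strictly between $x$ and $y$, contradicting that $y$ covers $x$. Hence $f$ carries the set of points covered by (respectively, covering) a given point bijectively onto the set of points covered by (respectively, covering) its image, so the property of being a beat point is preserved.

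For part \ref{fact2_automorphism_finite_spaces} I note first that $f$ preserves strict inequalities: $x \lneq y$ forces $f(x) \leq f(y)$ by \ref{fact1_automorphism_finite_spaces} and $f(x) \neq f(y)$ by injectivity. By the covering-relation argument above, $f$ sends each covering ``$x_{i+1}$ covers $x_i$'' to ``$f(x_{i+1})$ covers $f(x_i)$'', so a chain $x_0 \lneq \dots \lneq x_n$ is carried to a chain $f(x_0) \lneq \dots \lneq f(x_n)$ of the same length. The concluding ``in particular'' assertions are then immediate: a minimal (respectively, maximal) point is characterized purely by the order, so the two-sided order preservation of \ref{fact1_automorphism_finite_spaces} keeps these classes invariant, and \ref{fact2_automorphism_finite_spaces} shows that the index of each point within a chain is unchanged.

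I do not expect a genuine obstacle here; the content is entirely formal once the isomorphism $\finite \cong$ (finite posets) is in hand. The only step demanding a little care is the passage from preservation of $\leq$ to preservation of the covering relation (and hence of chains and of beat points), which is exactly where one must use that $f^{-1}$ is order-preserving as well, not merely $f$ itself.
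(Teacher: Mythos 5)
Your proposal is correct and takes the same route as the paper, whose entire proof is the one-line observation that $f$ and $f^{-1}$ are both continuous, hence both monotone, for finite spaces; you have simply expanded that remark into a careful verification that two-sided order preservation implies preservation of the covering relation, beat points, and chains. No gaps.
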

\begin{proof}
    This is a consequence of $f$ and $f^{-1}$ being continuous, which is the same as monotone for finite spaces.
\end{proof}
The next lemma is a simple observation regarding the wedge sum of finite spaces. Here we follow the notation of \cite{barmak}: $\mathcal{K}(X)$ denotes the \textit{order complex} of the finite space $X$, which consists of the simplicial complex whose simplices are the non-empty chains of $X$. This construction provides a functor $\mathcal{K}: \finite \rightarrow \asc$ in the reverse direction of the functor $\mathcal{X}$ defined in the beginning of this section. The functor $\mathcal{K}$ has several interesting properties, namely for every finite space $X$, there is a weak homotopy equivalence $\vert \mathcal{K}(X) \vert \rightarrow X$. For more details on this construction, we refer to \cite[Section 1.4]{barmak}.
\begin{lemma}[\cite{barmak}, Proposition 4.3.10] \label{prop:wedge}
    Let $X$ and $Y$ be two (pointed) finite spaces. There exists a weak homotopy equivalence $\vert \mathcal{K}(X) \vert \vee \vert \mathcal{K}(Y) \vert \rightarrow X \vee Y$. In particular, if $Y$ is weakly contractible, then $X \vee Y$ and $X$ have the same weak homotopy type.
\end{lemma}
\begin{proof}
There are weak homotopy equivalences $$ \vert \mathcal{K}(X)\vert \vee \vert \mathcal{K}(Y) \vert \rightarrow \vert \mathcal{K}(X \vee Y) \vert \rightarrow X \vee Y.$$ Moreover, if $Y$ is weakly contractible, then the CW-complex $\vert \mathcal{K}(Y) \vert$ is contractible and therefore $\vert \mathcal{K}(X)\vert \vee \vert \mathcal{K}(Y) \vert \simeq \vert \mathcal{K}(X)\vert $. Since $X$ is weakly equivalent to $\vert \mathcal{K}(X) \vert$, we get that $X$ is weakly equivalent to $X \vee Y$.
\end{proof}

\begin{theorem} \label{main:theorem:text}
 Let $G$ be a finite group acting on a finitely presentable group $M$. Then there is a topological space $X$ such that:
    \begin{enumerate}[label={\rm (\roman{*})}]
        \item\label{main:theorem:text:1} $\E(X) \cong G$;
        \item\label{main:theorem:text:2} $\pi_1(X) \cong M$;
        \item\label{main:theorem:text:3} the action $G\curvearrowright M$ is equivalent to the canonical action $\E(X) \curvearrowright \pi_1(X)$.
    \end{enumerate}
\end{theorem}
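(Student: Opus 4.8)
The plan is to upgrade Theorem~\ref{thm:aut:finite:spaces} from a statement about the homeomorphism group $\aut(X)$ to one about the self-homotopy equivalence group $\E(X)$. The obstruction is that $\aut(X) \cong \E(X)$ holds only for \emph{minimal} finite spaces \cite[Corollary 1.3.7]{barmak}, and the finite space $\mathcal{X}(\RG(\mathbf{K}))$ produced by Theorem~\ref{thm:aut:finite:spaces} is generally not minimal, as it contains beat points. So the core task is to remove beat points (thereby reaching a minimal finite space) in a way that simultaneously preserves the weak homotopy type (hence $\pi_1$ and the action on it) and the full automorphism group. The danger is that a careless collapse of beat points could create or destroy symmetries and alter $\aut(X)$.

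First I would take $\mathbf{K}$ to be a simplicial complex from Corollary~\ref{simplicial:complex:cor}, so that $\pi_1(\vert \mathbf{K}\vert) \cong M$, $\aut(\mathbf{K}) \cong G$, and the action $\aut(\mathbf{K}) \curvearrowright \pi_1(\vert \mathbf{K}\vert)$ is equivalent to $G \curvearrowright M$. Passing to the finite space $\mathcal{X}(\mathbf{K})$ via the functor $\mathcal{X}$, we retain $\aut(\mathcal{X}(\mathbf{K})) \cong \aut(\mathbf{K}) \cong G$ and the weak equivalence with $\vert \mathbf{K}\vert$. The key step is then a \emph{careful, symmetric} removal of beat points: because $\aut(\mathcal{X}(\mathbf{K})) \cong G$ acts on $\mathcal{X}(\mathbf{K})$, I would remove beat points in full $G$-orbits at a time. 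By Lemma~\ref{facts_automorphism_finite_spaces}, any automorphism permutes beat points and preserves the covering relation and chain positions, so the set of beat points is $G$-invariant; collapsing an entire orbit of beat points simultaneously yields a new finite space on which $G$ still acts and which is homeomorphic-equivariantly smaller. Iterating until no beat points remain produces a minimal finite space $X$ that is strong-equivalent (hence weakly equivalent) to $\mathcal{X}(\mathbf{K})$, so $\pi_1(X) \cong M$ with the correct action.

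The subtle point, and the main obstacle, is ensuring that the equivariant beat-point removal does not \emph{shrink} $G$ down further or \emph{enlarge} it: I must guarantee $\aut(X) \cong G$ exactly, not merely $\aut(X) \geq G$. The clean way to handle this is to invoke the theory of the \emph{core} of a finite space: every finite space $X_0$ has a core $X_c$ (its unique minimal strong deformation retract), the inclusion is an equivariant strong deformation retract, and crucially $\aut(X_0) \cong \aut(X_c)$ because a homotopy equivalence between minimal finite spaces is a homeomorphism and the core is functorial enough to carry automorphisms. Thus taking $X$ to be the core of $\mathcal{X}(\RG(\mathbf{K}))$ gives a minimal finite space with $\aut(X) \cong \aut(\mathcal{X}(\mathbf{K})) \cong G$ and the same weak homotopy type. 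Since $X$ is minimal, \cite[Corollary 1.3.7]{barmak} gives $\E(X) \cong \aut(X) \cong G$, establishing~\ref{main:theorem:text:1}. Finally, because the core inclusion $X \hookrightarrow \mathcal{X}(\RG(\mathbf{K}))$ is a $G$-equivariant weak equivalence inducing an isomorphism on $\pi_1$ compatible with the automorphism action, the commuting-action argument of Theorem~\ref{thm:aut:finite:spaces} transports directly, yielding~\ref{main:theorem:text:2} and~\ref{main:theorem:text:3}. The one place requiring genuine care is verifying that the identification $\E(X) \cong \aut(X) \cong G$ is \emph{compatible} with the identifications $\pi_1(X) \cong M$, so that the realized action is the original $G \curvearrowright M$ rather than a twist of it; this follows by chasing the equivariance of the core retraction through the same diagram used in Theorem~\ref{thm:aut:finite:spaces}.
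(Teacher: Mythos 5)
Your reduction to the core of $\mathcal{X}(\RG(\mathbf{K}))$ does not work, because the key claim it rests on --- that passing to the core preserves the \emph{automorphism} group, ``$\aut(X_0)\cong\aut(X_c)$'' --- is false. What the theory of cores gives is $\E(X_0)\cong\E(X_c)\cong\aut(X_c)$: the core computes the group of self-homotopy equivalences of the original space, not its group of homeomorphisms. For a non-minimal finite space these can differ drastically; e.g.\ the three-point poset $a<c$, $b<c$ has $\aut\cong\Z/2$ but its core is a point, so $\aut(X_c)=\{1\}$. Consequently, taking $X$ to be the core yields $\aut(X)\cong\E\bigl(\mathcal{X}(\RG(\mathbf{K}))\bigr)$, a group you have no control over --- indeed controlling $\E$ is exactly what the theorem is trying to achieve, so the argument is circular. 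The danger is not hypothetical here: the rigidifying bands $\mathbf{B_g}$ are contractible decorations glued along paths in $\mathbf{K}$, and the iterated removal of beat points can collapse them away entirely, resurrecting precisely the unwanted automorphisms that the $G$-rigidification was designed to kill. The same objection applies to your ``remove beat points in $G$-orbits'' variant: equivariance of the removal guarantees that $G$ still acts on the result, i.e.\ $\aut(X)\geq G$, but gives no upper bound.

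The paper goes in the opposite direction: rather than deleting beat points, it \emph{adds} structure at them. At every beat point of $\mathcal{X}(\RG(\mathbf{K}))$ it wedges on a copy of a rigid, weakly contractible, minimal finite space $W_l$ (Example~\ref{example:W2}) with height chosen larger than that of $\mathcal{X}(\RG(\mathbf{K}))$. Wedging with weakly contractible spaces preserves the weak homotopy type (Lemma~\ref{prop:wedge}), so $\pi_1$ and the action on it survive; a chain-length argument shows gluing points go to gluing points, so restriction gives an isomorphism $\aut(X)\cong\aut(\mathcal{X}(\RG(\mathbf{K})))\cong G$; and since the $W_l$ are minimal and were attached exactly at the beat points, $X$ has no beat points, whence $\E(X)\cong\aut(X)$ by \cite[Corollary 1.3.7]{barmak}. (The case $G=\{1\}$ is handled separately with the rigid complexes $\mathbf{W_k}$, a case your proposal also omits.) If you want to salvage a collapse-based argument, you would have to prove directly that the core of $\mathcal{X}(\RG(\mathbf{K}))$ is rigid relative to $G$, which is not automatic and is essentially a new problem.
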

\begin{proof}
As in the proof of Theorem \ref{simplicial:complex:thm}, we separate two cases: when $G$ is the trivial group and when $G$ is not trivial. 

\textit{Case $G \neq \{1\}$:} The idea is to glue several copies of the space $W_l$ from Example~\ref{example:W2} (with $l$ sufficiently large) in appropriate points of a space obtained from Theorem~\ref{thm:aut:finite:spaces}. 

Recall that the spaces from Theorem \ref{thm:aut:finite:spaces} were obtained by applying the functor $\mathcal{X}$ to the $G$-rigidification (with respect to some path $P$) of a suitable simplicial complex $\mathbf{K}$. The next steps of this proof will rely heavily on these previous constructions, so let us fix an $n$-dimensional (connected) simplicial complex $\mathbf{K}$ such that:
\begin{enumerate}[label={\rm (\roman{*})}]
    \item $\pi_1(\vert \mathbf{K} \vert )\cong M$;
    \item $G$ acts simplicially on $\vert \mathbf{K} \vert$, hence $G \leq \aut(\mathbf{K})$;
    \item $G \curvearrowright M$ is equivalent to $ G \curvearrowright \pi_1(\vert \mathbf{K} \vert)$.
\end{enumerate}

Let $P$ be a path of $\mathbf{K}$ containing every point of $\mathbf{K}$ (with possible repetitions). Then apply to $\mathbf{K}$ the $G$-rigidification construction $\RG(\mathbf{K};P)$, denoted $\RG(\mathbf{K})$ henceforth. By Proposition \ref{prop:of:rigidif}, $\aut(\RG(\mathbf{K}))= G $ and the geometric realization $\vert \RG(\mathbf{K}) \vert $ is homotopy equivalent to $\vert \mathbf{K} \vert$. By the arguments described in the beginning of the current section, $\mathcal{X}(\RG(\mathbf{K}))$ is a finite space satisfying the properties of Theorem~\ref{thm:aut:finite:spaces}.

However, $\mathcal{X}(\RG(\mathbf{K}))$ is not a minimal finite space, since there are simplices of $\RG(\mathbf{K})$ which are contained in a single higher dimensional simplex. These simplices of $\RG(\mathbf{K})$ give rise to beat points in $\mathcal{X}(\RG(\mathbf{K}))$. In fact, the only points of $\mathcal{X}(\RG(\mathbf{K}))$ that are beat points are the ones coming from: $k$-simplices of $\mathbf{K} \subset \RG(\mathbf{K})$ contained in a single $(k+1)$-simplex for $1 \leq k \leq n-1$; $n$-simplices of $\RG(\mathbf{K}) \setminus \mathbf{K}$ which are subsimplices of a single $(n+1)$-simplex; and points/$0$-simplices which have a single incident edge. We glue a copy of the space $W_{l}$ from Example~\ref{example:W2} by the 'rightmost' minimal point (labeled $a$ in Figure~\ref{DiagW2}) on each of those points, where $2l >n+1$, let us say that $l= \lceil \frac{n+1}{2} \rceil+1$. The space $X$ we obtain after gluing a space $W_l$ in every beat point of $\mathcal{X}(\RG(\mathbf{K}))$ can be described as follows, where $W_l^x$ denotes the copy of $W_l$ glued on the point $x \in \mathcal{X}(\RG(\mathbf{K}))$.
$$X= \mathcal{X}(\RG(\mathbf{K})) \bigvee_{ \substack{x \sim a \\ x \text{ beat point}}} W_l^x $$
Since $W_l$ is a weakly contractible space, applying Lemma \ref{prop:wedge} several times implies that the space $X$ we obtained has the same weak homotopy type as $\mathcal{X}(\RG(\mathbf{K}))$ and therefore $\pi_1(\mathcal{X}(\RG(\mathbf{K}))) \cong \pi_1(X)$. Observe that we are implicitly changing the base point of $X$, so as to wedge in different points of $\mathcal{X}(\RG(\mathbf{K}))$; however, no issues arise from this change since we are working with path-connected spaces. On the other hand, no new automorphisms are added. 

Let us show that the group homomorphism $\varphi: \aut(X) \longrightarrow \aut(\mathcal{X}(\RG(\mathbf{K}))$ given by restriction $f \mapsto f|_{\mathcal{X}(\RG(\mathbf{K}))}$ is well-defined and bijective. Consider $f \in \aut(X)$. 

The key observation is that gluing points must be mapped to gluing points. Indeed, a beat point $x$ of $\mathcal{X}(\RG(\mathbf{K}))$ comes from some $k$-simplex of $\RG(\mathbf{K})$ contained in a single $(k+1)$-simplex. This means that in $X$, the point $x$ is in position $k+1$ of a chain of length $2l+k$ whose length is maximal, i.e., we can not add points to this chain to form a bigger chain. Furthermore, since $\hght(W_l)=l > n+1=\hght(X)$, the only chains of length $2l+k$ (and whose length is maximal) are those that start with $k+1$ points of $\mathcal{X}(\RG(\mathbf{K})) \subset X$ and end with $2l$ points of some $W_l \subset X$. Hence, by Lemma \ref{facts_automorphism_finite_spaces} the image of $x$ by an automorphism of $X$ must be another point that was a beat point in $\mathcal{X}(\RG(\mathbf{K}))$ coming from a $k$-simplex. In particular, this implies, by connectedness, that $f(\mathcal{X}(\RG(\mathbf{K}))=\mathcal{X}(\RG(\mathbf{K}))$ and that $f|_{\mathcal{X}(\RG(\mathbf{K})}$ is an automorphism of $\mathcal{X}(\RG(\mathbf{K}))$, as it is bijective and monotone. This shows the restriction map $\varphi$ is well-defined. Since $W_l$ is rigid and by connectedness $f(W_l^x)=W_l^{f(x)}$, the automorphism $f$ is determined by its restriction to $\mathcal{X}(\RG(\mathbf{K}))$. This proves injectivity of $\varphi$. To check surjectivity, take a $g \in \aut(\RG(\mathbf{K}))$ and consider the automorphism $\overline g: X \rightarrow X$ given by $g$ on $\mathcal{X}(\RG(\mathbf{K}))$ and by taking $W_l^x$ to $W_l^{g(x)}$, where $x$ is a beat point of $\mathcal{X}(\RG(\mathbf{K}))$, as the identity. 

Finally, since $W_l$ is a minimal finite space and the gluing was done on the beat points of $\mathcal{X}(\RG(\mathbf{K}))$, $X$ has no beat points, hence $X$ is a minimal finite space, and $\aut(X) \cong \E(X) \cong G$. Furthermore, $\pi_1(X) \cong M$ and the action of $G$ on $M$ is equivalent to the action of $\E(X)$ on $\pi_1(X)$, since $G$ acts by automorphisms as before.

\textit{Case $G= \{1\}$:} We follow a strategy similar to the previous case. Fix an $n$-dimensional connected simplicial complex $\mathbf{K}$ such that $\pi_1(\vert K \vert )\cong M$ and let $V(\mathbf{K})=\{x_1, \dots, x_m \}$ be the set of vertices of $\mathbf{K}$. Then take $\mathbf{L}$ to be the simplicial complex $$\mathbf{L}= \mathbf{K} \cup \bigcup_{1\leq i \leq m} \mathbf{W_{m+i}}$$ as in the proof of Theorem \ref{simplicial:complex:thm}, where $\mathbf{W_k}$ is the simplicial complex defined in Example~\ref{simp:comp:W_k,k+1} with $ m+1 \leq k \leq 2m$. By Theorem \ref{simplicial:complex:thm}, $\mathbf{L}$ is rigid ($\aut(\mathbf{L}) \cong G = \{1\}$) and $\pi_1(\vert \mathbf{L} \vert) \cong M$, as $\vert \mathbf{L}\vert$ is homotopy equivalent to $\vert \mathbf{K} \vert$. However, applying the functor $\mathcal{X}$ to $\mathbf{L}$ does not yield a minimal finite space, since $\mathbf{L}$ contains $0$-simplices which have a single edge, $1$-simplices which are in a single $2$-simplex and possibly more $k$-simplices of $\mathbf{K}$ contained in a single $(k+1)$-simplex for $1 \leq k \leq n-1$. Therefore, we will glue a space $W_l$ with $l= \lceil \frac{n+1}{2}\rceil$ on each of those beat points. We construct the following space, where $W_l^x$ denotes the copy of $W_l$ glued on the beat point $x \in \mathcal{X}(\mathbf{L})$.
$$X  = \mathcal{X}(\mathbf{L}) \bigvee_{ \substack{x \sim a \\ x \text{ beat point}}} W_l^x .$$

By Lemma \ref{prop:wedge}, $X$ is weakly homotopy equivalent to $\mathcal{X}(\mathbf{L}),$ which implies that \( \pi_1(X) \cong \pi_1(\mathcal{X}(\mathbf{L})) \). Therefore all that remains to show is that $X$ is rigid. Note that, once again, the gluing points are mapped to gluing points. By connectedness, this ensures that $f(\mathcal{X}(\mathbf{L})) = \mathcal{X}(\mathbf{L})$ and $f(W_l^x) = W_l^{f(x)}.$ The restriction $f|_{\mathcal{X}(\mathbf{L})}$ is also an automorphism. Since both $W_l$ and $\mathcal{X}(\mathbf{L})$ are rigid, it follows that $X$ is rigid as well. Consequently, we obtain a minimal finite space $X$ satisfying $\aut(X) \cong \E(X) \cong \{1\}$ and $\pi_1(X) \cong M.$
\end{proof}

We finish by showing that, as a consequence, of Theorem~\ref{main:theorem}, every action of a finite group $G$ on a finitely generated abelian group $M$ arises as the action of the group of self-homotopy equivalences $\E(X)$ of a space $X$ on one of the higher homotopy groups $\pi_k(X)$ for some $k \geq 2$. Before we prove this assertion, we recall that the \textit{non-Hausdorff join} $X \circledast Y$ of two finite spaces $X$ and $Y$ is the disjoint union of $X$ and $Y$ with partial order given by keeping the orderings within $X$ and $Y$ and adding the relations $x \leq y$ for all $x \in X$ and $y \in Y$. As described in the beginning of \cite[Section 2.7]{barmak}, we have the following sequence of maps
$$   \vert \mathcal{K}(X \circledast Y)\vert=\vert \mathcal{K}(X) \ast \mathcal{K}(Y)\vert \cong \vert \mathcal{K}(X) \vert \ast \vert \mathcal{K}(Y) \vert $$ 
where $\ast$ denotes both the join of simplicial complexes and of topological spaces. When $Y=S^0$ the above equation yields a weak equivalence between $X \circledast S^0$ and the suspension $\Sigma \vert \mathcal{K}(X)\vert$ of the CW-complex $\vert \mathcal{K}(X)\vert$
\begin{equation}
    \label{equation:suspension} X\circledast S^0 \simeq \vert \mathcal{K}(X \circledast S^0)\vert=\vert \mathcal{K}(X) \ast \mathcal{K}(S^0)\vert \cong \vert \mathcal{K}(X) \vert \ast \vert \mathcal{K}(S^0) \vert  \simeq \Sigma \vert \mathcal{K}(X)\vert. \end{equation}
\begin{corollary}\label{main:cor:text}
    Let $k \geq 2$ be a natural number, $G$ a finite group and $M$ a finitely generated $\Z G$-module. Then, there is a topological space $X$ such that:
        \begin{enumerate}[label={\rm (\roman{*})}]
            \item\label{main:cor:text:1} $\E(X) \cong G$;
            \item\label{main:cor:text:2} $\pi_k(X) \cong M$;
            \item\label{main:cor:text:3} the action $G\curvearrowright M$ is equivalent to the canonical action $\E(X) \curvearrowright \pi_k(X)$.
        \end{enumerate}
\end{corollary}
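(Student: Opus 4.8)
The plan is to deduce this from Theorem~\ref{main:theorem:text} by a degree shift implemented through the non-Hausdorff join, and then to remove the spurious symmetries that the join introduces. First I would observe that a finitely generated abelian group is finitely presentable, so Theorem~\ref{main:theorem:text} applies to the $\Z G$-module $M$ and produces a minimal finite space $X_1$ with $\aut(X_1) \cong \E(X_1) \cong G$, with $\pi_1(X_1) \cong M$, and with the canonical action $\E(X_1) \curvearrowright \pi_1(X_1)$ equivalent to $G \curvearrowright M$. Since $M$ is abelian, the Hurewicz map is a natural $G$-isomorphism $\pi_1(X_1) \cong H_1(X_1)$, so $H_1(X_1) \cong M$ as $\Z G$-modules.

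Next I would raise the degree by suspending $k-1$ times. Set $Z = X_1 \circledast \underbrace{S^0 \circledast \cdots \circledast S^0}_{k-1}$, a finite space. Iterating \eqref{equation:suspension} gives a weak equivalence $Z \simeq \Sigma^{k-1} \vert \mathcal{K}(X_1) \vert \simeq \Sigma^{k-1} X_1$. As $X_1$ is connected, $\Sigma^{k-1}X_1$ is $(k-1)$-connected, so the Hurewicz theorem together with the suspension isomorphism yields $\pi_k(Z) \cong H_k(\Sigma^{k-1}X_1) \cong \tilde{H}_1(X_1) \cong M$. Letting $G = \aut(X_1)$ act on $Z$ through the $X_1$-factor (and trivially on the join coordinates) makes all of these isomorphisms $G$-equivariant, so the action $G \curvearrowright \pi_k(Z)$ is again the given action $G \curvearrowright M$.

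The main obstacle is that $Z$ has far more symmetry than $G$: the two points of each joined $S^0$ may be permuted, so $\aut(Z)$ properly contains $G$ (it acquires extra involutions, and the join of the spheres contributes still further symmetries). Thus forming $Z$ alone cannot give $\E(Z) \cong G$, and the extra automorphisms must be destroyed. To cut the automorphism group back down to $G$ I would feed $Z$ back into the rigidification pipeline: pass to the order complex $\mathcal{K}(Z)$, a finite connected simplicial complex with $\vert \mathcal{K}(Z) \vert \simeq Z$ and $G \leq \aut(Z) \hookrightarrow \aut(\mathcal{K}(Z))$, and apply Theorem~\ref{simplicial:complex:thm} to the pair $(\mathcal{K}(Z), G)$ to obtain $\mathbf{L}$ with $\aut(\mathbf{L}) \cong G$ and $\vert \mathbf{L} \vert \simeq \vert \mathcal{K}(Z) \vert$. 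As in Corollary~\ref{simplicial:complex:cor}, automorphisms of $\mathbf{L}$ are determined by restriction to $\mathcal{K}(Z)$ and the $G$-action is inherited, so $\pi_k(\vert \mathbf{L} \vert) \cong M$ equivariantly.

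Finally, to land in $\top$ with $\E = \aut$, I would run the minimalization step from the proof of Theorem~\ref{main:theorem:text} verbatim: apply $\mathcal{X}$ to $\mathbf{L}$ and glue a rigid, weakly contractible space $W_l$ from Example~\ref{example:W2}, with $l$ large enough that its height exceeds the dimension of $\mathbf{L}$, onto each beat point of $\mathcal{X}(\mathbf{L})$, obtaining a minimal finite space $X$. By Lemma~\ref{prop:wedge} this preserves the weak homotopy type, hence $\pi_k(X) \cong M$, and the gluing-point argument of Theorem~\ref{main:theorem:text} shows $\aut(X) \cong \E(X) \cong G$ with the action on $\pi_k(X)$ unchanged, establishing \ref{main:cor:text:1}--\ref{main:cor:text:3}. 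I expect the only genuine difficulty to be the bookkeeping of the previous paragraph—checking that the spurious join-symmetries are precisely what the rigidification eliminates, while both the weak homotopy type and the $\Z G$-module structure on $\pi_k$ are preserved under restriction and naturality; the degree-shifting computation itself is routine once \eqref{equation:suspension} is in hand.
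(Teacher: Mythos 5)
Your argument is correct, but it takes a genuinely different route from the paper at the crucial point, namely how the extra symmetries of the join are avoided. You join with the honest two-point space $S^0$, observe (correctly) that each such factor contributes an involution to $\aut(Z)$, and then kill these by feeding $\mathcal{K}(Z)$ back through the full rigidification machinery of Theorem~\ref{simplicial:complex:thm} and re-running the beat-point minimalization from the proof of Theorem~\ref{main:theorem:text}. The paper instead never lets the extra automorphisms appear: it replaces $S^0$ by the \emph{rigid} finite model $W_2 \sqcup \{p\}$ (a weakly contractible rigid space disjoint union a point, which is weakly equivalent to $S^0$ but has trivial automorphism group), and then invokes two facts from \cite{cgv:perm} -- that a non-Hausdorff join of minimal finite spaces is minimal, and that the automorphism group of such a join is the product of the automorphism groups of the factors -- to conclude $\aut(X) \cong G$ and $\E(X) \cong \aut(X)$ in one step, with no second pass through rigidification. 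Your approach buys independence from those auxiliary join lemmas and reuses only the machinery already developed in this paper, at the cost of repeating the rigidification-plus-minimalization pipeline and of the extra bookkeeping you yourself flag (equivariance of the collapse map $\vert\RG(\mathcal{K}(Z))\vert \to \vert\mathcal{K}(Z)\vert$ on $\pi_k$, which is handled exactly as in Corollary~\ref{simplicial:complex:cor}); the paper's approach is shorter and keeps the space $X$ as an explicit join of the space realizing $\pi_1$. Both yield the same naturality argument for statement \ref{main:cor:text:3} via Hurewicz and the suspension isomorphism.
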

\begin{proof}
Let $Y$ be a space obtained from the construction in the proof of Theorem~\ref{main:theorem:text}. Then $Y$ is a minimal finite space and the following hold:
\begin{enumerate}[label={\rm (\roman{*})}]
            \item $\aut(Y) \cong \E(Y) \cong G$;
            \item $\pi_1(Y) \cong M$;
            \item the action $G\curvearrowright M$ is equivalent to the canonical action $\E(Y) \curvearrowright \pi_1(Y)$.
        \end{enumerate}
Observe that we can use Theorem~\ref{main:theorem:text}, since a finitely generated abelian group is also finitely presented. 

Now we consider the finite space $X$ obtained by the $(k-1)$-fold non-Hausdorff join of $Y$ with $W_2 \sqcup\{p\}$, where $W_2$ is the space from Example~\ref{example:W2} and $\sqcup$ denotes disjoint union. More precisely, $X$ is defined as follows
$$X = Y \circledast \underbrace{(W_2 \sqcup \{p\}) \circledast \dots \circledast(W_2 \sqcup\{p\}).}_{k-1 \text{ times}}$$
Since the non-Hausdorff join of minimal finite spaces is minimal \cite[Lemma~3.4 (i)]{cgv:perm}, the space $X$ is minimal. Furthermore, the space $W_2 \sqcup \{p\}$ is rigid, hence by \cite[Lemma~3.4 (iv)]{cgv:perm}, 
$$\aut(X)= \aut(Y) \times \aut(W_2 \sqcup \{p\})\times \dots   \times \aut(W_2 \sqcup \{p\}) \cong G \times \{1\} \times \dots \times \{1\}\cong G.$$
On the other hand, since $\vert \mathcal{K}(W_2 \sqcup\{p\})\vert$ is homotopy equivalent to $S^0$, we obtain that $X$ is weakly equivalent to $\Sigma^{k-1}\vert \mathcal{K}(Y)\vert$, by a similar argument to the one presented in (\ref{equation:suspension}). Thus, by Hurewicz's Theorem (since $\pi_1(X)$ is abelian) and the suspension isomorphism for homology we get
$$\pi_1(Y)\cong \pi_1(\vert \mathcal{K}(Y) \vert) \cong H_1(\vert \mathcal{K}(Y)\vert) \cong H_{k}(\Sigma^{k-1}\vert \mathcal{K}(Y)\vert) $$
and, since $\Sigma^{k-1} \vert\mathcal{K}(Y)\vert$ is $(k-1)$-connected, Hurewicz's Theorem yields
$$H_{k}(\Sigma^{k-1}\vert \mathcal{K}(Y)\vert) \cong \pi_k(\Sigma^{k-1}\vert \mathcal{K}(Y) \vert) \cong \pi_k(X).$$
Therefore, we obtain that $\pi_k(X) \cong \pi_1(Y) \cong M$. In order to see that the canonical action $\E(X) \curvearrowright \pi_k(X)$ is equivalent to $G \curvearrowright M$, observe that the maps involved in the above sequences of isomorphism are natural at the level of topological spaces. With the exception of the last map, these are either the suspension isomorphism for homology, the Hurewicz map or the composition of the functors $\vert \cdot \vert$ and $\mathcal{K}$. The last map is a composition of $\vert\mathcal{K}(\cdot)\vert$ with homeomorphisms and a map that collapses $\vert \mathcal{K}(W_2 \sqcup \{p\})\vert$ onto $S^0$ in the join $\vert \mathcal{K}(X) \vert\ast \vert \mathcal{K}(W_2 \sqcup \{p\})\vert$, which is also natural with respect to the first factor.
\end{proof}

\bibliographystyle{plain} 
\bibliography{references}

\end{document}